\documentclass{amsart}
\usepackage{amssymb}
\usepackage{color}
\usepackage{graphicx}
\theoremstyle{plain}
\newtheorem{theorem}{Theorem}[section]
\newtheorem{corollary}[theorem]{Corollary}
\newtheorem{definition}[theorem]{Definition}
\newtheorem{proposition}[theorem]{Proposition}

\newtheorem{lemma}[theorem]{Lemma}
\numberwithin{equation}{section}

\makeatletter
\@namedef{subjclassname@2020}{%
  \textup{2020} Mathematics Subject Classification}
\makeatother

\begin{document}

\title[Hankel Determinants for certain Bernoulli and Euler Polynomials]{Orthogonal polynomials and Hankel Determinants for certain Bernoulli and Euler Polynomials}

\author{Karl Dilcher}
\address{Department of Mathematics and Statistics\\
         Dalhousie University\\
         Halifax, Nova Scotia, B3H 4R2, Canada}
\email{dilcher@mathstat.dal.ca}

\author[Lin Jiu]{Lin Jiu\textsuperscript{*}}
\thanks{*corresponding author}
\address{Department of Mathematics and Statistics\\
         Dalhousie University\\
         Halifax, Nova Scotia, B3H 4R2, Canada}
\email{lin.jiu@dal.ca}

\keywords{Bernoulli polynomial, Euler polynomial, Hankel 
determinant, orthogonal polynomial, continued fraction, polygamma function}
\subjclass[2020]{Primary 11B68; Secondary 33D45, 11C20, 30B70}
\thanks{Research supported in part by the Natural Sciences and Engineering
        Research Council of Canada, Grant \# 145628481}

\date{}

\setcounter{equation}{0}

\begin{abstract}
Using continued fraction expansions of certain polygamma functions as a
main tool, we find orthogonal polynomials with respect to the odd-index 
Bernoulli polynomials $B_{2k+1}(x)$ and the Euler polynomials $E_{2k+\nu}(x)$,
for $\nu=0, 1, 2$. In the process we also determine the corresponding Jacobi
continued fractions (or J-fractions) and Hankel determinants. In all these
cases the Hankel determinants are polynomials in $x$ which factor completely
over the rationals.
\end{abstract}

\maketitle

\section{Introduction}

A {\it Hankel matrix} or {\it persymmetric matrix} is a symmetric matrix which 
has constant entries along its antidiagonals; in other words, it is of the form
\begin{equation}\label{1.0}
\big(c_{i+j}\big)_{0\leq i,j\leq n}
=\begin{pmatrix}
c_{0} & c_{1} & c_{2} & \cdots & c_{n}\\
c_{1} & c_{2} & c_{3} & \cdots & c_{n+1}\\
c_{2} & c_{3} & c_{4} & \cdots & c_{n+2}\\
\vdots & \vdots & \vdots & \ddots & \vdots\\
c_{n} & c_{n+1} & c_{n+2} & \cdots & c_{2n}
\end{pmatrix}.
\end{equation}
A {\it Hankel determinant} is then the determinant of a Hankel matrix. 
Furthermore, given a sequence ${\bf c}=(c_0,c_1,\ldots)$ of numbers or 
polynomials, we define the $n$th Hankel determinant of ${\bf c}$ to be 
\[
H_n({\bf c}) = H_n(c_k) = \det_{0\leq i,j\leq n}\big(c_{i+j}\big).
\]
If we use the second notation, $H_n(c_k)$, it is always assumed that the 
sequence begins with $k=0$; it should be noted that the value of the Hankel
determinant depends on this in an essential way. We shall return to this issue in Section \ref{sec:ShiftedSequences}.

The Hankel determinants of a sequence are closely related to certain orthogonal 
polynomials and continued fractions. It is the purpose of this paper to obtain 
new results, including new Hankel determinants, for certain 
sequences of Bernoulli and Euler polynomials. We recall that the 
{\it Bernoulli numbers} $B_n$
and {\it polynomials} $B_n(x)$ are usually defined by the generating functions
\begin{equation}\label{1.1}
\frac{t}{e^t-1} = \sum_{n=0}^\infty B_n\frac{t^n}{n!}\qquad\hbox{and}\qquad
\frac{te^{xt}}{e^t-1} = \sum_{n=0}^\infty B_n(x)\frac{t^n}{n!}.
\end{equation}
We have $B_0=1$, $B_1=-1/2$, and $B_{2j+1}=0$ for $j\geq 1$; a few further
values are listed in Table~2. The {\it Euler numbers} $E_n$ and 
{\it polynomials} $E_n(x)$ are defined by the generating functions
\begin{equation}\label{1.3}
\frac{2}{e^t+e^{-t}} = \sum_{n=0}^\infty E_n\frac{t^n}{n!}\qquad\hbox{and}\qquad
\frac{2e^{xt}}{e^t+1} = \sum_{n=0}^\infty E_n(x)\frac{t^n}{n!}.
\end{equation}
The first few values are again given in Table~2. These four sequences are 
among the most important special number and polynomial sequences in mathematics,
with numerous applications in number theory, combinatorics, numerical analysis,
and other areas. The most basic properties can be found, e.g., in 
\cite[Ch.~24]{DLMF}.

To set the stage for our results, we first consider the sequence 
${\bf b}=(B_k)_{k\geq 0}$ and compute the first few Hankel determinants
$H_n(B_k)$, $n=0, 1,\ldots, 6$:
\[
1,\quad -\frac{1}{12},\quad -\frac{1}{540},\quad \frac{1}{42\,000},\quad
\frac{1}{3\,215\,625},\quad -\frac{4}{623\,959\,875},\quad
-\frac{64}{213\,746\,467\,935}.
\]
If we factor a somewhat larger term, for instance
\[
H_{10}(B_k) 
= -\frac{2^{42}\cdot3^{15}\cdot 5^4}{11^{11}\cdot 13^9\cdot 17^5\cdot 19^3},
\]
we see, especially in the denominator, that a definite pattern emerges. In fact,
these are special cases of the following known result: If 
${\bf b}=(B_k)_{k\geq 0}$ then for all $n\geq 0$ we have
\begin{equation}\label{1.5}
H_n(B_k) = (-1)^{\binom{n+1}{2}}
\prod_{\ell=1}^n\left(\frac{\ell^{4}}{4(2\ell+1)(2\ell-1)}\right)^{n+1-\ell}.
\end{equation}
It is somewhat surprising that such a formula should exist since the numerators
of Bernoulli numbers are rather deep and mysterious and are, for instance, 
closely related to the classical theory of Fermat's Last Theorem; see, e.g.,
\cite{Ri}. For example, we have
\[
B_{12} = -\frac{691}{2\cdot 3\cdot 5\cdot 7\cdot 13},
\]
where 691 in the numerator is a prime; here we mention in passing that the
denominators of all Bernoulli numbers are completely determined by the theorem
of Clausen and von Staudt; see, e.g., \cite[Sect.~24.10(i)]{DLMF}. However,
the Hankel determinant $H_6(B_k)$, which contains $B_{12}$, has only a power
of 2 in the numerator. 

Another surprising fact becomes apparent when we replace $B_k$ by the Bernoulli
polynomial $B_k(x)$: The determinant \eqref{1.5} remains the same. That is,
there is no dependence on the variable $x$. The smallest nontrivial example of
this is
\[
H_1\big(B_k(x)\big)) = \det\begin{pmatrix}
1 & x-\tfrac{1}{2} \\
x-\tfrac{1}{2} & x^2-x+\tfrac{1}{6}
\end{pmatrix} = -\frac{1}{12}.
\]
This is a well-known phenomenon, which will be mentioned in the next section.

Now, if instead of $b_k=B_k(x)$ we take the subsequences $b_k=B_{2k}(x)$ or
$b_k=B_{2k+1}(x)$, things change drastically. In the first case the Hankel
determinants $H_n(B_{2k}(x))$ are polynomials of increasing degrees that are
apparently irreducible. However, in the second case the Hankel determinants,
while still polynomials in $x$, factor completely into linear factors over
$\mathbb Q$, and a strong pattern emerges. This becomes more visible when we
replace $x$ by $\frac{x+1}{2}$; see Table~1.

\bigskip
\begin{center}
{\renewcommand{\arraystretch}{1.2}
\begin{tabular}{|c||l|}
\hline
$n$ & $H_n(B_{2k+1}(\frac{x+1}{2}))$ \\
\hline
0 & $\frac{1}{2}x$ \\
1 & $-\frac{1}{48}x^2(x^2-1)$ \\
2 & $-\frac{1}{4\,320}x^3(x^2-1)^2(x^2-2^2)$ \\
3 & $\frac{1}{672\,000}x^4(x^2-1)^3(x^2-2^2)^2(x^2-3^2)$ \\
4 & $\frac{1}{102\,900\,000}x^5(x^2-1)^4(x^2-2^2)^3(x^2-3^2)^2(x^2-4^2)$ \\
\hline
\end{tabular}}

\medskip
{\bf Table~1}: $H_n(B_{2k+1}(\frac{x+1}{2}))$ for $0\leq n\leq 4$.
\end{center}
\bigskip
The corresponding general identity is as follows.

\begin{theorem}\label{thm:1.1}
If $b_k=B_{2k+1}(\frac{x+1}{2})$, then for $n\geq 0$ we have
\begin{equation}\label{1.6}
H_n(b_k) = (-1)^{\binom{n+1}{2}}\left(\frac{x}{2}\right)^{n+1}
\prod_{\ell=1}^{n}\left(\frac{\ell^{4}(x^{2}-\ell^{2})}{4(2\ell+1)(2\ell-1)}\right)^{n+1-\ell}.
\end{equation}
\end{theorem}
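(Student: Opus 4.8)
The plan is to reduce the evaluation of $H_n(b_k)$ to the determination of a Jacobi continued fraction (equivalently, of the three-term recurrence for the associated monic orthogonal polynomials) and then to extract that continued fraction from a classical expansion of a polygamma function. First I would invoke the standard correspondence between a quasi-definite moment sequence, its monic orthogonal polynomials, and its J-fraction: if $c_k=\mathcal L(u^k)$ are the moments of a linear functional $\mathcal L$, if the monic orthogonal polynomials satisfy $P_{m+1}(u)=(u-\beta_m)P_m(u)-\lambda_m P_{m-1}(u)$, and if $h_m=\mathcal L(P_m^2)$, then $\lambda_m=h_m/h_{m-1}$ and the Hankel determinant factors as $H_n(c_k)=\prod_{j=0}^{n}h_j=c_0^{\,n+1}\prod_{\ell=1}^{n}\lambda_\ell^{\,n+1-\ell}$. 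Comparing this with \eqref{1.6} and using the elementary identity $(-1)^{\binom{n+1}{2}}=\prod_{\ell=1}^{n}(-1)^{\,n+1-\ell}$, the theorem becomes equivalent to the two assertions $c_0=b_0=B_1(\tfrac{x+1}{2})=\tfrac{x}{2}$ and
\[
\lambda_\ell=-\,\frac{\ell^{4}(x^{2}-\ell^{2})}{4(2\ell+1)(2\ell-1)}=\frac{\ell^{4}(\ell^{2}-x^{2})}{4(4\ell^{2}-1)},\qquad \ell\geq 1.
\]
The value $c_0=\tfrac{x}{2}$ is immediate from \eqref{1.1}, so the entire content lies in identifying the $\lambda_\ell$. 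A convenient feature is that the diagonal coefficients $\beta_m$ never enter the Hankel determinant, so they need not be computed for the theorem.

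Next I would produce the generating function of the moments. Extracting the odd part of the Bernoulli generating function in \eqref{1.1} and setting the argument to $\tfrac{x+1}{2}$ gives the closed form $\sum_{k\geq 0}B_{2k+1}(\tfrac{x+1}{2})\,\dfrac{t^{2k+1}}{(2k+1)!}=\dfrac{t\,\sinh(xt/2)}{2\sinh(t/2)}$, where the half-integer shift is exactly what produces the clean factor $\sinh(xt/2)$ in the numerator (this is the reason for the substitution $x\mapsto\tfrac{x+1}{2}$ in the statement). Expanding $1/\sinh(t/2)$ by partial fractions over its poles at $t\in 2\pi i\mathbb Z$ rewrites this as a sum of terms $\tfrac{1}{t^{2}+4\pi^{2}n^{2}}$ and exhibits each $b_k$ as a constant multiple of $\sum_{n\geq 1}(-1)^n\sin(\pi n x)/n^{2k+1}$. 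Assembling the ordinary generating function $\sum_k b_k t^k$ then identifies it, as a Stieltjes-type (Borel) transform, with a polygamma function of an argument built from $x$ and $t$, which is the object for which a Jacobi continued fraction is classically available.

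The decisive step, and the one I expect to be the main obstacle, is to establish that continued fraction with \emph{exactly} the coefficients $\lambda_\ell$ above. I would set up the sequence of tails $f_\ell$ of the J-fraction and show that a contiguity relation for the relevant polygamma function — a shift in its order or its argument — reproduces the recursion $f_{\ell-1}=\big(1-\beta_{\ell-1}t-\lambda_\ell t^{2}f_\ell\big)^{-1}$ with the claimed $\lambda_\ell$, thereby converting the analytic identity into an inductive verification. The occurrence of the factor $x^{2}-\ell^{2}=(\ell-x)(\ell+x)$ in $\lambda_\ell$ is precisely the fingerprint of such a relation, since the continued fractions of these polygamma (and the underlying trigonometric) functions have partial numerators that are products of linear factors in the order $\ell$ and in the parameter $x$; getting the normalizing constants $4(4\ell^2-1)$ and the sign right is the delicate part. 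Once the $\lambda_\ell$ are matched, substituting into $H_n=c_0^{\,n+1}\prod_\ell\lambda_\ell^{\,n+1-\ell}$ and restoring the global sign via $\prod_\ell(-1)^{\,n+1-\ell}=(-1)^{\binom{n+1}{2}}$ yields \eqref{1.6}. The entries of Table~1 for $n\le 2$ (which force $\lambda_1=(1-x^2)/12$ and $\lambda_2=4(4-x^2)/15$) serve as a reliable check that both $c_0$ and the first coefficients $\lambda_\ell$ have been normalized correctly.
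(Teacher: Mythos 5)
Your reduction of the theorem to the identification of the J-fraction coefficients is exactly the paper's strategy: Lemma~\ref{lem:3.4} (equivalently, Corollary~\ref{cor:3.3} together with $H_n=\zeta_n H_{n-1}$) turns \eqref{1.6} into the two claims $b_0=\tfrac{x}{2}$ and $t_\ell=-\tau_\ell=\tfrac{\ell^4(\ell^2-x^2)}{4(4\ell^2-1)}$, and your observation that the diagonal coefficients do not enter the determinant is correct. Your identification of the moment generating function with a polygamma expression also parallels the paper's Lemma~\ref{lem:4.3}, which writes $\sum_k B_{2k+1}(\tfrac{x+1}{2})z^{2k}$ as $\tfrac{1}{2z^2}\big(\psi'(\tfrac1z+\tfrac{1-x}{2})-\psi'(\tfrac1z+\tfrac{1+x}{2})\big)$ via the asymptotic expansion of $\log\Gamma$ and the reflection formula \eqref{4.7}.

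However, there is a genuine gap at what you yourself call the decisive step. Establishing the continued fraction with \emph{exactly} the coefficients $\tau_\ell$ is the entire content of the proof, and your proposal only describes a plan for it (``I would set up the sequence of tails \dots and show that a contiguity relation \dots reproduces the recursion''), without exhibiting the contiguity relation, carrying out the inductive verification of the tails, or pinning down the normalizing constants $4(4\ell^2-1)$ and the signs --- which you explicitly flag as ``the delicate part'' and then leave unresolved. The paper closes precisely this gap by quoting a specific continued fraction of Ramanujan, \eqref{4.8}, for $\sum_{k\ge 0}(s-b+2k+1)^{-2}-\sum_{k\ge 0}(s+b+2k+1)^{-2}$ (with partial numerators $4(n^2-b^2)n^4$ and partial denominators $(2n+1)(s^2-b^2+2n(n+1)+1)$), substituting $s=2/z$, $b=x$, and then applying the equivalence transformation \eqref{3.13} with $r_m=z^2/(4(2m-1))$ to bring it into J-fraction form. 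Without either citing such a classical expansion or actually proving the tail recursion you sketch, the argument does not yet establish \eqref{1.6}; checking the cases $n\le 2$ against Table~1 confirms the normalization but cannot substitute for the general identity.
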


In this paper we will prove this result and similar identities for
certain sequences of Euler polynomials. In the process we establish some 
mutual connections between Hankel determinants, orthogonal polynomials, and 
certain continued fractions. 

We begin by recalling some basic but important identities
in Section~2, which is followed, in Section~3, by some necessary
background on orthogonal polynomials and continued fractions. Our main results
on Bernoulli and Euler polynomials are then stated and proved in Sections~4
and~5, respectively. Finally, in Section~6 we consider the relationship between 
$H_n(c_k)$ and $H_n(c_{k+1})$, with some consequences for earlier results.

\section{Some basic identities}

In this brief section we collect a few general properties of Bernoulli and 
Euler polynomials and of Hankel determinants that will 
be required in later sections.
We begin with two identities that connect Bernoulli and Euler numbers with their
polynomial analogues:
\begin{align}
B_n(x) &= \sum_{j=0}^n\binom{n}{j}B_j x^{n-j},\label{1.2}\\
E_n(x) &= \sum_{j=0}^n\binom{n}{j}
\frac{E_j}{2^j}\big(x-\tfrac{1}{2}\big)^{n-j}.\label{1.4}
\end{align}
These identities follow easily from \eqref{1.1}, resp.\ \eqref{1.3}. The 
Bernoulli and Euler polynomials are also connected to each other through
\begin{equation}\label{2.0}
E_{n-1}(x) = \frac{2^n}{n}\left(B_n(\tfrac{x+1}{2})-B_n(\tfrac{x}{2})\right)
\end{equation}
(see, e.g., \cite[Eq.~24.4.23]{DLMF}). Another important property is the 
reflection formula
\begin{equation}\label{4.7}
B_{n}(1-x)=(-1)^{n}B_{n}(x)
\end{equation}
(see, e.g., \cite[Eq.~24.4.3]{DLMF}), with the same identity also holding for
the Euler polynomials \cite[Eq.~24.4.4]{DLMF}.

\bigskip
\begin{center}
{\renewcommand{\arraystretch}{1.2}
\begin{tabular}{|r||r|r|l|l|}
\hline
$n$ & $B_n$ & $E_n$ & $B_n(x)$ & $E_n(x)$\\
\hline
0 & 1 & 1 &  1 & 1 \\
1 & $-1/2$ & 0 & $x-\tfrac{1}{2}$ & $x-\tfrac{1}{2}$ \\
2 & $1/6$ & $-1$  & $x^2-x+\tfrac{1}{6}$ & $x^2-x$ \\
3 & 0 & 0 & $x^3-\tfrac{3}{2}x^2+\tfrac{1}{2}x$ & $x^3-\tfrac{3}{2}x^2+\tfrac{1}{4}$ \\
4 & $-1/30$ & 5 & $x^4-2x^3+x^2-\tfrac{1}{30}$ & $x^4-2x^3+x$ \\
5 & 0 & 0 & $x^5-\tfrac{5}{2}x^4+\tfrac{5}{3}x^3-\tfrac{1}{6}x$ &
$x^5-\tfrac{5}{2}x^4+\tfrac{5}{2}x^2-\tfrac{1}{2}$ \\
6 & $1/42$ & $-61$ & $x^6-3x^5+\tfrac{5}{2}x^4-\tfrac{1}{2}x^2+\tfrac{1}{42}$ &
$x^6-3x^5+5x^3-3x$ \\
\hline
\end{tabular}}

\medskip
{\bf Table~2}: $B_n, E_n, B_n(x)$ and $E_n(x)$ for $0\leq n\leq 6$.
\end{center}
\bigskip

Next we state two useful properties of Hankel determinants. We begin with the 
easier one.

\begin{lemma}\label{lem:2.1}
Let $x$ be a variable or a complex number. Then
\begin{equation}\label{2.1}
H_n(x^kc_k) = x^{n(n+1)}H_n(c_k).
\end{equation}
\end{lemma}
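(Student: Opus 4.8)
The plan is to prove Lemma~\ref{lem:2.1} by a direct column-and-row scaling argument on the Hankel matrix, exploiting the way the factor $x^k$ distributes over the index $k=i+j$. The key observation is that the $(i,j)$ entry of the matrix $\big(x^{i+j}c_{i+j}\big)_{0\le i,j\le n}$ factors as $x^i\cdot x^j\cdot c_{i+j}$, so the weight attached to each entry splits cleanly into a row contribution $x^i$ and a column contribution $x^j$. This multiplicative separation is exactly what makes the determinant collapse to a power of $x$ times $H_n(c_k)$.

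First I would write $M=\big(x^{i+j}c_{i+j}\big)_{0\le i,j\le n}$ and note that $M = D\,C\,D$, where $C=\big(c_{i+j}\big)_{0\le i,j\le n}$ is the original Hankel matrix and $D=\operatorname{diag}(x^0,x^1,\dots,x^n)$ is the diagonal matrix with entries $x^i$. Indeed the $(i,j)$ entry of $DCD$ is $x^i\,c_{i+j}\,x^j=x^{i+j}c_{i+j}$, as required. Then I would take determinants and use multiplicativity together with $\det D=\prod_{i=0}^n x^i = x^{\binom{n+1}{2}}$, giving
\begin{equation*}
H_n(x^k c_k)=\det M=(\det D)^2\,\det C = x^{2\binom{n+1}{2}}\,H_n(c_k).
\end{equation*}
Finally I would simplify the exponent via $2\binom{n+1}{2}=n(n+1)$ to obtain \eqref{2.1}.

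The argument works verbatim whether $x$ is a formal variable or a complex number, since it only uses the ring structure of the entries; no invertibility of $D$ is needed because we never divide by $x$, we only multiply determinants. I do not anticipate a genuine obstacle here: the only point requiring a little care is the bookkeeping of the exponent, namely verifying that each of the two factors of $\det D$ contributes $\binom{n+1}{2}=0+1+\cdots+n$ and that the total is $n(n+1)$. An alternative, if one prefers to avoid the matrix factorization, is to pull the factor $x^i$ out of the $i$th row and the factor $x^j$ out of the $j$th column of the determinant directly, which produces the same power $x^{n(n+1)}$ in front; this is perhaps the most transparent way to present the computation and is the form I would ultimately write down.
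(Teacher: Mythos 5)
Your proof is correct and is essentially the paper's argument: the paper pulls the powers of $x$ out of the rows and columns (by division), which is exactly the row-and-column scaling you describe, and your $M=DCD$ factorization is just a tidier way of writing the same computation. The only (minor) improvement is that multiplying by $\det D$ twice avoids dividing by $x$, so no caveat about $x\neq 0$ is needed.
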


\begin{proof}
We consider the determinant of the matrix in \eqref{1.0}, with $x^kc_k$ in place
of $c_k$. We divide the second row by $x$, the third row by $x^2$, etc., and 
finally the $(n+1)$th row by $x^n$. Then, similarly, we divide the 2nd column
by $x$, etc., up to the $(n+1)$th column which we divide by $x^n$. What remains
is the determinant of $(c_{i+j})$, while the total power of $x$ taken out is
$2(1+2+\cdots+n)=n(n+1)$, which completes the proof.
\end{proof}

The next lemma can be found, with proof, in \cite{Ju}; it is also mentioned and
used in various other publications, for instance in \cite[Lemma~15]{Kr}.

\begin{lemma}\label{lem:2.2}
Let $(c_0, c_1,\ldots)$ be a sequence and $x$ a number or a variable. If
\[
c_k(x) = \sum_{j=0}^k\binom{k}{j}c_jx^{k-j},
\]
then for all $n\geq 0$ we have
\begin{equation}\label{2.2}
H_n(c_k(x)) = H_n(c_k).
\end{equation}
\end{lemma}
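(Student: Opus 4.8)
The plan is to exhibit the Hankel matrix of the transformed sequence as a product of the original Hankel matrix with a triangular matrix of determinant $1$ on each side, so that the two determinants coincide. Concretely, I would introduce the lower-triangular matrix $P=(p_{ij})_{0\le i,j\le n}$ with entries $p_{ij}=\binom{i}{j}x^{i-j}$, where as usual $\binom{i}{j}=0$ for $j>i$. Since $P$ is lower triangular with every diagonal entry equal to $1$, we have $\det P=1$, and the same holds for its transpose $P^{T}$.

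The key step is to verify the factorization $\big(c_{i+j}(x)\big)_{0\le i,j\le n}=P\,C\,P^{T}$, where $C=(c_{i+j})_{0\le i,j\le n}$ is the Hankel matrix of the original sequence. Expanding the $(i,j)$-entry of $PCP^{T}$ gives
\[
(PCP^{T})_{ij}=\sum_{a=0}^{i}\sum_{b=0}^{j}\binom{i}{a}\binom{j}{b}\,x^{(i-a)+(j-b)}\,c_{a+b}.
\]
Grouping terms according to $m=a+b$ and invoking the Vandermonde convolution $\sum_{a}\binom{i}{a}\binom{j}{m-a}=\binom{i+j}{m}$, this double sum collapses to $\sum_{m=0}^{i+j}\binom{i+j}{m}c_{m}x^{i+j-m}=c_{i+j}(x)$, which is precisely the $(i,j)$-entry of the Hankel matrix of $\big(c_k(x)\big)$.

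Once the factorization is in place, the conclusion is immediate: taking determinants and using multiplicativity together with $\det P=\det P^{T}=1$ yields
\[
H_n(c_k(x))=\det\!\big(PCP^{T}\big)=\det C=H_n(c_k).
\]
The only point requiring genuine care—the main obstacle—is the bookkeeping in the double sum, namely confirming that the finite ranges $0\le a\le i$ and $0\le b\le j$ do not interfere with the Vandermonde identity. This is handled automatically by the convention that $\binom{i}{a}$ vanishes outside $0\le a\le i$, so no terms are lost or spuriously introduced when the summation is reindexed by $m$.
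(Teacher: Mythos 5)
Your proof is correct: the factorization $\big(c_{i+j}(x)\big)=PCP^{T}$ with the unitriangular Pascal-type matrix $P$, verified via the Vandermonde convolution, immediately gives equality of the determinants. The paper itself gives no proof of this lemma, deferring instead to the cited reference \cite{Ju}, where essentially this same congruence-by-Pascal-matrix argument is the standard one; your version is a clean, self-contained rendering of it.
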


By the identity \eqref{1.2}, this lemma shows that $H_n(B_k(x)) = H_n(B_k)$,
as already mentioned in the Introduction. Similarly, applying both \eqref{2.2}
and \eqref{2.1} to \eqref{1.4}, we see that
\begin{equation}\label{2.3}
H_n(E_k(x)) = 2^{-n(n+1)}H_n(E_k).
\end{equation}

\section{Orthogonal polynomials and continued fractions}

As already mentioned in the Introduction, the Hankel matrices of 
a sequence are closely related to
certain orthogonal polynomials and continued fractions. The origin of much of
this lies in a remarkable result of Touchard \cite[Eq.~(44)]{To} who defined
a polynomial sequence $(R_n(y))$ by $R_0(y)=1$, $R_1(y)=y+1/2$, and 
\begin{equation}\label{3.1}
R_{n+1}(y) = \big(y+\tfrac{1}{2}\big)R_n(y)+\frac{n^4}{4(2n+1)(2n-1)}R_{n-1}(y)
\qquad (n\geq 1).
\end{equation}
Then Touchard showed that 
\begin{equation}\label{3.2}
y^rR_n(y)\bigg|_{y^k=B_k} = 0\qquad (0\leq r\leq n-1).
\end{equation}
We can compute, for example, 
\[
R_4(y) = y^4+2y^3+\tfrac{17}{7}y^2+\tfrac{10}{7}y+\tfrac{12}{35},
\]
and therefore, by \eqref{3.2}, we get 
\[
B_{4+r}+2B_{3+r}+\tfrac{17}{7}B_{2+r}+\tfrac{10}{7}B_{1+r}+\tfrac{12}{35}B_r = 0
\qquad (0\leq r\leq 3).
\] 
Carlitz \cite[Eq.~(4.7)]{Ca} and more explicitly Al-Salam and Carlitz 
\cite[p.~93]{AC} proved an
analogue of Touchard's result for Euler numbers, and very recently the 
second author and Shi
\cite{JS} extended these results to Bernoulli and Euler polynomials as well 
as higher-order Euler polynomials. 

To explain all this, and to prove the results in this paper, we require some
facts from the classical theory of orthogonal polynomials. Suppose we are given
a sequence ${\bf c}=(c_0, c_1, \ldots)$ of numbers; then it is known that there
exists a positive Borel measure $\mu$ on ${\mathbb R}$ with infinite support
such that 
\begin{equation}\label{3.3} 
c_k = \int_{\mathbb R}y^kd\mu(y),\qquad k = 0, 1, 2, \ldots,
\end{equation}
that is, the moment problem has a solution, if and only if the corresponding
Hankel determinants satisfy $H_n({\bf c})>0$ for all $n\geq 0$. We may also
normalize the sequence such that $c_0=1$. We now summarize several well-known
facts and state them as a lemma with two corollaries; see, e.g., 
\cite[Ch.~2]{Is}.

\begin{lemma}\label{lem:3.1}
If $\mu$ is the measure in \eqref{3.3}, there exists a unique sequence of monic
polynomials $P_n(y)$ of degree $n$, $n=0, 1, \ldots$, and a sequence of positive
numbers $(\zeta_n)_{n\geq 1}$, with $\zeta_0=1$, such that 
\begin{equation}\label{3.4}
\int_{\mathbb R}P_m(y)P_n(y)d\mu(y) = \zeta_n\delta_{m,n},
\end{equation}
where $\delta_{m,n}$ is the Kronecker delta function. Furthermore, for all 
$n\geq 1$ we have $\zeta_n=H_n({\bf c})/H_{n-1}({\bf c})$, and for $n\geq 0$,
\begin{equation}\label{3.5}
P_n(y) = \frac{1}{H_{n-1}({\bf c})}\det
\begin{pmatrix}
c_{0} & c_{1} & \cdots & c_{n}\\
c_{1} & c_{2} & \cdots & c_{n+1}\\
\vdots & \vdots & \ddots & \vdots\\
c_{n-1} & c_{n} & \cdots & c_{2n-1} \\
1 & y & \cdots & y^n
\end{pmatrix},
\end{equation}
where the polynomials $P_n(y)$ satisfy the 3-term recurrence relation 
$P_0(y)=1$, $P_1(y)=y+s_0$, and
\begin{equation}\label{3.6}
P_{n+1}(y) = (y+s_n)P_n(y) - t_nP_{n-1}(y)\qquad (n\geq 1),
\end{equation}
for some sequences $(s_n)_{n\geq 0}$ and $(t_n)_{n\geq 1}$.
\end{lemma}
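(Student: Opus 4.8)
The plan is to realize the $P_n$ as the Gram--Schmidt orthogonalization of the monomial basis with respect to the inner product $\langle f,g\rangle=\int_{\mathbb R}f(y)g(y)\,d\mu(y)$, and then to read off the explicit formula \eqref{3.5}, the norms $\zeta_n$, and the recurrence \eqref{3.6} from general principles. First I would observe that because $\mu$ has infinite support, the monomials $1,y,y^2,\ldots$ are linearly independent in $L^2(\mu)$, so the inner product is positive definite on polynomials; equivalently, the Gram matrix of $1,y,\ldots,y^n$ is precisely the Hankel matrix \eqref{1.0}, whose determinant $H_n({\bf c})$ is positive by hypothesis. Applying Gram--Schmidt to $1,y,y^2,\ldots$ therefore produces, without breakdown, a sequence of monic polynomials $P_n$ of degree $n$ that are mutually orthogonal. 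Uniqueness is immediate: if $P_n$ and $Q_n$ are both monic of degree $n$ and orthogonal to every polynomial of degree $<n$, then $P_n-Q_n$ lies in that space and is orthogonal to it, hence has zero norm, forcing $P_n=Q_n$ by positive definiteness.

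Next I would verify the determinant formula \eqref{3.5} directly. Expanding the displayed determinant along its last row shows it is a polynomial in $y$ of degree $n$ whose coefficient of $y^n$ is the minor $\det(c_{i+j})_{0\le i,j\le n-1}=H_{n-1}({\bf c})$; dividing by $H_{n-1}({\bf c})$ thus yields a monic polynomial, which I call $\widetilde P_n$. To see that $\widetilde P_n$ is the orthogonal polynomial, I compute $\langle y^r,\widetilde P_n\rangle$ for $0\le r\le n-1$ by multiplying the determinant by $y^r$ and integrating term by term; since $\int_{\mathbb R}y^{r+j}\,d\mu=c_{r+j}$, this replaces the last row $(1,y,\ldots,y^n)$ by $(c_r,c_{r+1},\ldots,c_{r+n})$, which for $0\le r\le n-1$ coincides with the $(r{+}1)$st row already present, so the determinant vanishes. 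Hence $\widetilde P_n\perp y^r$ for all $r<n$, and by uniqueness $\widetilde P_n=P_n$. The same computation with $r=n$ replaces the last row by $(c_n,\ldots,c_{2n})$, giving $H_n({\bf c})$; since $P_n$ is monic and orthogonal to all lower degrees, $\zeta_n=\langle P_n,P_n\rangle=\langle P_n,y^n\rangle=H_n({\bf c})/H_{n-1}({\bf c})$, which is positive.

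Finally, for the three-term recurrence I would exploit that multiplication by $y$ is self-adjoint for $\langle\cdot,\cdot\rangle$. Expanding $yP_n(y)$, a monic polynomial of degree $n+1$, in the orthogonal basis gives $yP_n=P_{n+1}+\sum_{k=0}^{n}a_{n,k}P_k$; for each $k\le n-2$ one has $a_{n,k}\zeta_k=\langle yP_n,P_k\rangle=\langle P_n,yP_k\rangle=0$ because $yP_k$ has degree $<n$, so only the $P_{n+1}$, $P_n$, and $P_{n-1}$ terms survive. Rearranging yields \eqref{3.6} with $s_n=-a_{n,n}$ and $t_n=a_{n,n-1}$, and the check $t_n\zeta_{n-1}=\langle yP_n,P_{n-1}\rangle=\langle P_n,yP_{n-1}\rangle=\zeta_n$ shows $t_n=\zeta_n/\zeta_{n-1}$. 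None of these steps is genuinely hard; the one demanding the most care is the row-replacement argument for \eqref{3.5}, where I must confirm that multiplying the $y$-dependent last row by $y^r$ and integrating really turns each entry $y^{r+j}$ into the moment $c_{r+j}$ while leaving the constant rows untouched, so that the collapse onto a repeated row is legitimate.
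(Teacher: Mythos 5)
Your proof is correct and complete; the paper itself gives no proof of Lemma~\ref{lem:3.1}, deferring instead to the standard literature (Ismail, Ch.~2), and your argument --- Gram--Schmidt existence/uniqueness, the row-replacement evaluation of the determinant formula, and the three-term recurrence via self-adjointness of multiplication by $y$ --- is exactly the classical proof found there. The only (trivial) points left implicit are $\zeta_0=\int d\mu=c_0=1$ and the convention $H_{-1}({\bf c})=1$ needed for \eqref{3.5} at $n=0$.
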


We now multiply both sides of \eqref{3.5} by $y^r$ and replace $y^j$ by $c_j$, 
which includes replacing the constant term 1 by $c_0$ for $r=0$.
(Similar evaluations apply in the rest of this paper, at $y^k=a_k$ including 
$k=0$, for some sequence $(a_n)$).
Then for $0\leq r\leq n-1$ the last row of the matrix in \eqref{3.5} is 
identical with one of the previous rows, and thus the determinant is 0. When 
$r=n$, the determinant is $H_n({\bf c})$. We therefore have the following 
result.

\begin{corollary}\label{cor:3.2}
With the sequence $(c_k)$ and the polynomials $P_n(y)$ as above, we have
\begin{equation}\label{3.7}
y^rP_n(y)\bigg|_{y^k=c_k} = \begin{cases}
0 &\hbox{when}\;\; 0\leq r\leq n-1,\\
H_n({\bf c})/H_{n-1}({\bf c}) &\hbox{when}\;\; r=n.
\end{cases}
\end{equation}
\end{corollary}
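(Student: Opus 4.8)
The plan is to argue directly from the determinantal representation \eqref{3.5} of $P_n(y)$, viewing the substitution $y^k\mapsto c_k$ (with the convention that the constant term $1$ is sent to $c_0$) as a linear functional $L$ acting on polynomials in $y$, and then to exploit the fact that the coefficients of $P_n(y)$ are, up to the common factor $1/H_{n-1}({\bf c})$, the cofactors of the last row of the matrix in \eqref{3.5}. First I would expand \eqref{3.5} along its last row $(1,y,\ldots,y^n)$ and observe that multiplication by $y^r$ merely shifts each monomial $y^j$ to $y^{r+j}$ while leaving the cofactors untouched, since those cofactors involve only the first $n$ rows. Hence
\[
y^rP_n(y)=\frac{1}{H_{n-1}({\bf c})}\det
\begin{pmatrix}
c_{0} & c_{1} & \cdots & c_{n}\\
\vdots & \vdots & \ddots & \vdots\\
c_{n-1} & c_{n} & \cdots & c_{2n-1}\\
y^r & y^{r+1} & \cdots & y^{r+n}
\end{pmatrix}.
\]

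Next I would apply $L$, i.e.\ substitute $y^k=c_k$. Because $L$ is linear and the cofactor expansion along the last row is linear in the entries of that row, this amounts to replacing the last row by $(c_r,c_{r+1},\ldots,c_{r+n})$. Two cases then settle the claim. When $0\le r\le n-1$, the new last row is exactly the $r$th Hankel row $(c_r,c_{r+1},\ldots,c_{r+n})$, so the matrix has two identical rows and its determinant vanishes. When $r=n$, the last row becomes $(c_n,c_{n+1},\ldots,c_{2n})$ and the whole matrix is precisely the Hankel matrix $(c_{i+j})_{0\le i,j\le n}$ of \eqref{1.0}, whose determinant is $H_n({\bf c})$; dividing by $H_{n-1}({\bf c})$ yields the ratio $H_n({\bf c})/H_{n-1}({\bf c})$.

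I do not expect a genuine obstacle here, as both cases reduce to the elementary facts that a determinant with a repeated row is zero and that the surviving matrix is the full Hankel matrix. The one point requiring care is the interchange in the second step: one must justify that multiplying the \emph{polynomial} $P_n(y)$ by $y^r$ and then applying the linear substitution coincides with inserting $(c_r,\ldots,c_{r+n})$ into the last row before evaluating the determinant. This follows cleanly from the cofactor expansion along the last row, since $L$ sends each monomial $y^{r+j}$ to $c_{r+j}$ and leaves the minors unchanged. One should also record that $H_{n-1}({\bf c})\neq 0$, which is guaranteed by the positivity $H_n({\bf c})>0$ of the moment problem underlying \eqref{3.3}, so that the division is legitimate throughout.
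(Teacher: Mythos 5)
Your proposal is correct and follows essentially the same route as the paper: the authors likewise multiply \eqref{3.5} by $y^r$, substitute $y^j\mapsto c_j$ so that the last row becomes $(c_r,\ldots,c_{r+n})$, and conclude by noting the repeated row for $0\le r\le n-1$ and the full Hankel matrix for $r=n$. Your additional remarks on the linearity of the substitution via cofactor expansion and on $H_{n-1}({\bf c})\neq 0$ only make explicit what the paper leaves implicit.
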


This corollary, by the way, is consistent with \eqref{3.2}. Another important
consequence of Lemma~3.1 will be an essential ingredient in most of our results.
For the sake of completeness we give a proof of this well-known result.

\begin{corollary}\label{cor:3.3}
With the sequence $(t_n)$ as in \eqref{3.6}, we have
\begin{equation}\label{3.8}
H_n({\bf c}) = t_1^nt_2^{n-1}\cdots t_{n-1}^2t_n\qquad (n\geq 0).
\end{equation}
\end{corollary}

\begin{proof}
Multiplying both sides of \eqref{3.6} by $P_{n-1}(y)$, then integrating and using
\eqref{3.4}, we get the recurrence $\zeta_n=t_n\zeta_{n-1}$. Iterating this and
recalling that $\zeta_0=1$, we get $\zeta_n=t_{n}t_{n-1}\cdots t_1$. If we 
combine this with the identity $H_n({\bf c})=\zeta_n H_{n-1}({\bf c})$ (see the
line before \eqref{3.5}) and iterate again, we immediately get \eqref{3.8}.
\end{proof} 

Corollary~\ref{cor:3.3} shows, in particular, that the Hankel determinants in
\eqref{1.5} immediately follow from the recurrence relation \eqref{3.1}. This
example brings the following issue to light:

The Hankel determinants in \eqref{1.5} are obviously not all positive; but this
was a requirement in the theory involving orthogonal polynomials. To get around
this potential problem, we consider $\widetilde{B}_k=i^kB_k$ for all $k\geq 0$,
where $i=\sqrt{-1}$. Then (see Table~2) we have $\widetilde{B}_0=1$,
$\widetilde{B}_1=-i/2$, and $\widetilde{B}_{2j}$ is negative for all $j$, while
all corresponding Hankel determinants are positive, as required. Finally, by
Lemma~\ref{lem:2.1} with $x=i$ we have
\[
H_n(\widetilde{B}_k) = (-1)^{n(n+1)/2}H_n(B_k),
\]
which is consistent with \eqref{1.5}. Observations of this kind can also be 
made in other similar situations in this paper, so that we do not need to worry
about the positivity of the related Hankel determinants.

The next result which we require establishes a connection with certain continued
fractions (in this case called $J$-fractions). It can be found in various 
relevant publications, for instance in \cite[p.~20]{Kr}.

\begin{lemma}\label{lem:3.4}
Let ${\bf c}=(c_k)_{k\geq 0}$ be a sequence of numbers with $c_0\neq 0$, and
suppose that its generating function is written in the form
\begin{equation}\label{3.9}
\sum_{k=0}^{\infty}c_k t^k
= \cfrac{c_0}{1+s_0t-\cfrac{t_1t^2}{1+s_1t-\cfrac{t_2t^2}{1+s_2t-\ddots}}},
\end{equation}
where both sides are considered as formal power series. Then
the sequences $(s_n)$ and $(t_n)$ are the same as in \eqref{3.6}, and
\begin{equation}\label{3.10}
H_n({\bf c}) = c_0^{n+1}t_1^{n}t_2^{n-1}\cdots t_{n-1}^2t_{n}.
\end{equation}
\end{lemma}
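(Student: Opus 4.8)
The plan is to reduce to the normalized case $c_0=1$ and then to identify the data of the continued fraction \eqref{3.9} with the recurrence data of the orthogonal polynomials already produced in Lemma~\ref{lem:3.1}, after which Corollary~\ref{cor:3.3} yields the determinant formula. First I would dispose of the constant $c_0$: replacing $c_k$ by $\tilde c_k:=c_k/c_0$ scales every entry of the Hankel matrix \eqref{1.0} by $1/c_0$, so that $H_n(\tilde{\bf c})=c_0^{-(n+1)}H_n({\bf c})$, while on the generating-function side it merely factors $c_0$ out in front of the J-fraction. It therefore suffices to establish the result for $c_0=1$ and then multiply by $c_0^{n+1}$; this single factor is exactly what distinguishes \eqref{3.10} from \eqref{3.8}.

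The heart of the matter is to show that the parameters appearing in the expansion \eqref{3.9}, which a priori are merely the coefficients of some J-fraction representing $\sum_k c_k t^k$, coincide with the recurrence coefficients $s_n,t_n$ of \eqref{3.6}. Here I would use the classical link between the convergents of a J-fraction and the reciprocals of the orthogonal polynomials. Writing the $n$th convergent of \eqref{3.9} as $A_n(t)/B_n(t)$, the denominators obey the recurrence $B_n(t)=(1+s_{n-1}t)B_{n-1}(t)-t_{n-1}t^2B_{n-2}(t)$; substituting $y=1/t$ into \eqref{3.6} and clearing denominators shows that the reciprocal polynomials $t^nP_n(1/t)$ satisfy precisely this recurrence, so that $B_n(t)=t^nP_n(1/t)$. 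The crucial step is then to verify that $A_n(t)/B_n(t)$ really is the $n$th convergent of the J-fraction of $\sum_k c_k t^k$, i.e.\ that this series and the convergent agree to the required order. Expanding $B_n(t)=t^nP_n(1/t)$ and collecting the coefficient of $t^m$ in $B_n(t)\sum_k c_k t^k$, one finds for $n\le m\le 2n-1$ that it equals $y^{m-n}P_n(y)\big|_{y^k=c_k}$, which vanishes by Corollary~\ref{cor:3.2}. Since the J-fraction expansion of a power series with $c_0\neq0$ is unique, the coefficients $s_n,t_n$ in \eqref{3.9} must be those of \eqref{3.6}.

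Once this identification is in place, Corollary~\ref{cor:3.3} gives $H_n(\tilde{\bf c})=t_1^nt_2^{n-1}\cdots t_{n-1}^2t_n$ for the normalized sequence, and restoring the factor $c_0^{n+1}$ from the first paragraph produces \eqref{3.10}. The main obstacle I anticipate lies in the second paragraph: one must match the formal-power-series expansion of the convergent against $\sum_k c_k t^k$ carefully enough to recognize the vanishing coefficients as exactly the orthogonality relations of Corollary~\ref{cor:3.2}, and to justify the uniqueness of the J-fraction that finally pins down the parameters. The remaining steps are routine scaling and a direct appeal to the already-established product formula for $H_n$.
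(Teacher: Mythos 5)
Your argument is correct in outline, but note that the paper does not prove Lemma~\ref{lem:3.4} at all: it is quoted from the literature (Krattenthaler, \cite[p.~20]{Kr}), so there is nothing internal to compare against. What you have written is essentially the standard proof from that literature: normalize to $c_0=1$ (the $(n+1)\times(n+1)$ determinant picks up exactly the factor $c_0^{n+1}$ separating \eqref{3.10} from \eqref{3.8}), identify the reversed orthogonal polynomials $t^nP_n(1/t)$ with the convergent denominators, use Corollary~\ref{cor:3.2} to show the convergents match the series to order $t^{2n}$, invoke uniqueness of the J-fraction, and finish with Corollary~\ref{cor:3.3}. Two points deserve tightening. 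First, as phrased, "the $n$th convergent of \eqref{3.9}" is circular: the denominators of \eqref{3.9} satisfy the recurrence with the \emph{unknown} coefficients of \eqref{3.9}, whereas $t^nP_n(1/t)$ satisfies it with the coefficients of \eqref{3.6}; you should instead build the candidate J-fraction from the data of \eqref{3.6}, prove it expands to $\sum c_kt^k$, and only then compare with \eqref{3.9} by uniqueness (your closing sentence shows this is what you intend). In that comparison you also need the numerators: the degree-$\le n-1$ truncations $Q_n$ of $B_n(t)\sum c_kt^k$ satisfy the same three-term recurrence and initial conditions as the convergent numerators $A_n$, whence $Q_n=A_n$ and $\sum c_kt^k-A_n/B_n=O(t^{2n})$. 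Second, uniqueness of a J-fraction expansion requires all $t_n\neq 0$, not merely $c_0\neq 0$ (if some $t_n=0$ the tail is undetermined); here this is guaranteed because $t_n=\zeta_n/\zeta_{n-1}\neq 0$ in the setting of Lemma~\ref{lem:3.1}. With these repairs the proof is complete and supplies a self-contained justification that the paper itself delegates to a reference.
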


With the exception of the factor $c_0^{n+1}$, the identities \eqref{3.10} and 
\eqref{3.8} are the same. The difference comes from the assumption $c_0=1$ in
\eqref{3.3} and in Lemma~\ref{lem:3.1}, which can be suitably relaxed.

We finish this section with some definitions and results which will also be 
needed later.  Following the
usage in books such as \cite{Cu} or \cite{LW}, we write
\begin{equation}\label{3.11}
b_0+\operatornamewithlimits{\bf K}_{m=1}^{\infty}\big(a_m/b_m\big)
=b_0+\operatornamewithlimits{\bf K}\big(a_m/b_m\big)
=b_0+\cfrac{a_1}{b_1+\cfrac{a_2}{b_2+\ddots}}
\end{equation}
for an infinite continued fraction. The $n$th {\it approximant\/} is
expressed by
\begin{equation}\label{3.12}
b_0+\operatornamewithlimits{\bf K}_{m=1}^{n}\big(a_m/b_m\big)
=b_0+\cfrac{a_1}{b_1+\ddots+\cfrac{a_n}{b_n}} = \frac{A_n}{B_n},
\end{equation}
and $A_n, B_n$ are called the $n$th {\it numerator} and {\it denominator},
respectively. The continued fraction \eqref{3.11} is said to {\it converge} if
the sequence of approximants in \eqref{3.12} converges. In this case, the limit
is called the {\it value} of the continued fraction \eqref{3.11}.

Two continued fractions are said to be {\it equivalent} if and only if they
have the same sequences of approximants. In other words, we have
\[
b_0+\operatornamewithlimits{\bf K}_{m=1}^{n}\big(a_m/b_m\big)
=d_0+\operatornamewithlimits{\bf K}_{m=1}^{n}\big(c_m/d_m\big)
\]
if and only if there exists a sequence of nonzero complex numbers
$(r_m)_{m\geq 0}$ with $r_0=1$, such that
\begin{equation}\label{3.13}
d_m = r_mb_m,\qquad c_{m+1}=r_{m+1}r_ma_{m+1}\qquad (m\geq 0);
\end{equation}
see \cite[Eq.~(1.4.2)]{Cu}. We also require the following special case of the
more general concept of a contraction; see, e.g., \cite[p.~16]{Cu}.

\begin{definition}\label{def:3.5}
{\rm Let $A_n, B_n$ be the $n$th numerator and denominator, respectively, of a
continued fraction cf$_1:=b_0+{\bf K}\big(a_m/b_m\big)$, and let $C_n, D_n$ be
the corresponding quantities of cf$_2:=d_0+{\bf K}\big(c_m/d_m\big)$. Then
Then cf$_2$ is called an {\it even canonical contraction} of cf$_1$ if
\[
C_n=A_{2n},\qquad D_n=B_{2n}\qquad (n\geq 0),
\]
and is called an {\it odd canonical contraction} of cf$_1$ if}
\[
C_0=\frac{A_1}{B_1},\quad D_0=1,\quad
C_n=A_{2n+1},\quad D_n=B_{2n+1}\qquad (n\geq 1).
\]
\end{definition}

We will now state three identities that will be used in later sections; see
\cite[pp.~16--18]{Cu}, \cite[pp.~83--85]{LW}, or \cite[pp.~21--22]{Wa} for proofs and further
details.

\begin{lemma}\label{lem:3.6}
An even canonical contraction of $b_0+{\bf K}\big(a_m/b_m\big)$ exists if and
only if $b_{2k}\neq 0$ for $k\geq 1$, and we have
\begin{align}
b_0&+\operatornamewithlimits{\bf K}_{m=1}^{\infty}\big(a_m/b_m\big)=b_0\label{3.14}\\
&+\cfrac{a_{1}b_{2}}{a_{2}+b_{1}b_{2}-\cfrac{a_{2}a_{3}b_{4}/b_{2}}
{a_{4}+b_{3}b_{4}+a_{3}b_{4}/b_{2}-\cfrac{a_{4}a_{5}b_{6}/b_{4}}
{a_{6}+b_{5}b_{6}+a_{5}b_{6}/b_{4}-\cfrac{a_{6}a_{7}b_{8}/b_{6}}{\ddots}}}}.\nonumber
\end{align}
In particular, with $b_0=0$, $b_k=1$ for $k\geq 1$, $a_1=1$, and
$a_k=\alpha_{k-1}t$ ($k\geq 1$), for some variable $t$, we have
\begin{equation}\label{3.15}
\cfrac{1}{1-\cfrac{\alpha_{1}t}{1-\cfrac{\alpha_{2}t}{1-\ddots}}}
=\cfrac{1}{1-\alpha_{1}t-\cfrac{\alpha_{1}\alpha_{2}t^{2}}{1-(\alpha_{2}
+\alpha_{3})t-\cfrac{\alpha_{3}\alpha_{4}t^{2}}{1-(\alpha_{4}+\alpha_{5})t
-\cfrac{\alpha_{5}\alpha_{6}t^{2}}{\ddots}}}}.
\end{equation}
Similarly, an odd canonical contraction gives
\begin{equation}\label{3.16}
1+\cfrac{\alpha_{1}t}{1-(\alpha_{1}+\alpha_{2})t-\cfrac{\alpha_{2}\alpha_{3}t^{2}}
{1-(\alpha_{3}+\alpha_{4})t-\cfrac{\alpha_{4}\alpha_{5}t^{2}}
{1-(\alpha_{5}+\alpha_{6})t-\cfrac{\alpha_{6}\alpha_{7}t^{2}}{\ddots}}}}
\end{equation}
for the continued fraction on the left-hand side of \eqref{3.15}.
\end{lemma}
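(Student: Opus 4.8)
The plan is to derive all three identities directly from the fundamental recurrence relations for the numerators $A_n$ and denominators $B_n$ of the continued fraction cf$_1=b_0+\mathbf{K}(a_m/b_m)$, treating everything as an identity among approximants so that no convergence questions arise. Recall that both $A_n$ and $B_n$ satisfy the common three-term recurrence $X_n=b_nX_{n-1}+a_nX_{n-2}$ for $n\geq 1$, differing only in the initial data $A_{-1}=1,\,A_0=b_0$ versus $B_{-1}=0,\,B_0=1$. The observation that streamlines the whole argument is exactly this: since $A_n$ and $B_n$ obey the \emph{same} recurrence, any linear relation expressing a subsequence of the $A_n$ through earlier $A_n$ with coefficients built from the $a_m,b_m$ automatically holds verbatim for the $B_n$. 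Hence it suffices to produce such a relation once; it then defines a genuine canonical contraction in the sense of Definition~\ref{def:3.5}, with $C_n$ and $D_n$ inheriting the same partial numerators and denominators.

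For \eqref{3.14} I would start from $A_{2n}=b_{2n}A_{2n-1}+a_{2n}A_{2n-2}$ and substitute $A_{2n-1}=b_{2n-1}A_{2n-2}+a_{2n-1}A_{2n-3}$, which expresses $A_{2n}$ through $A_{2n-2}$ and the stray odd term $A_{2n-3}$. I then eliminate the latter by solving $A_{2n-2}=b_{2n-2}A_{2n-3}+a_{2n-2}A_{2n-4}$ for $A_{2n-3}$; this is the single place where the hypothesis $b_{2k}\neq 0$ is used, since the step divides by $b_{2n-2}$. The upshot is a three-term recurrence $C_n=d_nC_{n-1}+c_nC_{n-2}$ with $d_n=a_{2n}+b_{2n-1}b_{2n}+a_{2n-1}b_{2n}/b_{2n-2}$ and $c_n=-a_{2n-2}a_{2n-1}b_{2n}/b_{2n-2}$, which are precisely the partial denominators and (signed) numerators displayed in \eqref{3.14}. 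By the remark above the $B_{2n}$ obey the identical recurrence, so $C_n=A_{2n},\,D_n=B_{2n}$ is an even canonical contraction; only the first step needs separate inspection, where $A_2=(b_1b_2+a_2)b_0+a_1b_2$ gives $d_1=a_2+b_1b_2$ and $c_1=a_1b_2$.

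With \eqref{3.14} in hand, \eqref{3.15} follows by setting $b_0=0$, $b_k=1$ for $k\geq 1$, $a_1=1$, and $a_k=-\alpha_{k-1}t$ for $k\geq 2$, the minus sign being absorbed into the displayed subtractions: the formulas above collapse to $d_1=1-\alpha_1t$, $d_n=1-(\alpha_{2n-2}+\alpha_{2n-1})t$ and $c_n=-\alpha_{2n-3}\alpha_{2n-2}t^2$ for $n\geq 2$, which is exactly the right-hand side, with no equivalence transformation needed. For \eqref{3.16} I would run the mirror computation, combining $A_{2n+1}=b_{2n+1}A_{2n}+a_{2n+1}A_{2n-1}$ with the recurrence for $A_{2n}$ and eliminating the even term $A_{2n-2}$ via $A_{2n-1}=b_{2n-1}A_{2n-2}+a_{2n-1}A_{2n-3}$ (now dividing by $b_{2n-1}$, which is harmless for the all-ones data). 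This yields $d_n=a_{2n+1}+b_{2n}b_{2n+1}+a_{2n}b_{2n+1}/b_{2n-1}$ and $c_n=-a_{2n-1}a_{2n}b_{2n+1}/b_{2n-1}$; specializing gives $d_n=1-(\alpha_{2n-1}+\alpha_{2n})t$ and $c_n=-\alpha_{2n-2}\alpha_{2n-1}t^2$, as in \eqref{3.16}.

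The part I expect to be most delicate, and would write out most carefully, is the initial segment of the \emph{odd} contraction, which by Definition~\ref{def:3.5} begins not at $A_0,B_0$ but at $C_0=A_1/B_1$, $D_0=1$. For the specialized data one computes $A_1=b_1b_0+a_1=1$ and $B_1=b_1=1$, so $C_0=1$, accounting for the leading constant $1$ in \eqref{3.16}; and the first partial numerator is $c_1=-a_1a_2b_3/b_1=\alpha_1t$, which is genuinely positive and so appears with a plus sign. Apart from this boundary bookkeeping and the routine matching of signs between the formal $\mathbf{K}$-notation and the minus signs in the displayed fractions, the argument is entirely mechanical.
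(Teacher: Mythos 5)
The paper does not prove Lemma~\ref{lem:3.6} at all; it is quoted from the literature with pointers to \cite[pp.~16--18]{Cu}, \cite[pp.~83--85]{LW}, and \cite[pp.~21--22]{Wa}. Your argument is essentially the proof given in those references: derive a three-term recurrence for the subsequence $A_{2n}$ (resp.\ $A_{2n+1}$) by composing two steps of the basic recurrence $X_n=b_nX_{n-1}+a_nX_{n-2}$ and eliminating the stray intermediate term, observe that the identical relation holds for the $B_n$ since they satisfy the same recurrence, and check the initial segment separately. I verified your coefficients: $d_n=a_{2n}+b_{2n-1}b_{2n}+a_{2n-1}b_{2n}/b_{2n-2}$, $c_n=-a_{2n-2}a_{2n-1}b_{2n}/b_{2n-2}$ for the even case and the analogous odd-case formulas are correct, and the specializations do reproduce \eqref{3.15} and \eqref{3.16}, including the boundary values $C_0=A_1/B_1=1$ and $c_1=\alpha_1t$ in the odd contraction. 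Your reading of the hypothesis in \eqref{3.15} as $a_k=-\alpha_{k-1}t$ for $k\geq 2$ is the right one (the lemma's statement silently absorbs the signs into the displayed subtractions).

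Two minor loose ends, neither fatal. First, you prove only the ``if'' half of the existence claim: you show that $b_{2k}\neq 0$ suffices because it is exactly what permits the elimination step, but you do not argue that the even canonical contraction fails to exist when some $b_{2k}=0$; a sentence on necessity (or an explicit restriction to the direction actually used in the paper) would close this. Second, in the odd case the canonical contraction is normalized by $C_0=A_1/B_1$, $D_0=1$ rather than $C_0=A_1$, $D_0=B_1$, so the step linking $C_1$ to $C_0$ is not literally the $n=1$ instance of your general recurrence unless $B_1=b_1=1$; this holds for the specialized data in \eqref{3.16}, which is all the lemma asserts, but it is worth saying explicitly rather than leaving it under ``boundary bookkeeping.''
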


\section{The Bernoulli polynomial case}

We begin by stating the main result of this section.

\begin{theorem}\label{thm:4.1}
Let $b_{k}=B_{2k+1}(\frac{x+1}{2})$, and let $\big(W_{n}(y;x)\big)_{n\geq 0}$
be the sequence of polynomials orthogonal in $y$ with respect to the sequence
$(b_{k})$, that is,
\begin{equation}\label{4.1}
y^{r}W_{n}(y;x)\bigg|_{y^k=b_{k}}=0\qquad (0\leq r\leq n-1).
\end{equation}
Then we have $W_{0}(y;x)=1$, $W_{1}(y;x)=y+\sigma_{0}$, and for $n\geq 1$, 
\begin{equation}\label{4.2}
W_{n+1}(y;x)=(y+\sigma_{n})W_{n}(y;x)+\tau_{n}W_{n-1}(y;x),
\end{equation}
where
\begin{equation}\label{4.3}
\sigma_{n}=\binom{n+1}{2}-\frac{x^{2}-1}{4}\qquad\hbox{and}\qquad
\tau_{n}=\frac{n^{4}(x^{2}-n^{2})}{4(2n+1)(2n-1)}.
\end{equation}
\end{theorem}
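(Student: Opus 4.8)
The plan is to establish the three-term recurrence \eqref{4.2}--\eqref{4.3} by exhibiting the generating function of the sequence $(b_k)=\big(B_{2k+1}(\tfrac{x+1}{2})\big)$ as a $J$-fraction of the form \eqref{3.9}, and then reading off $\sigma_n$ and $\tau_n$ from the identification guaranteed by Lemma~\ref{lem:3.4}. Concretely, I would first compute the exponential generating function $\sum_{k\geq 0}B_{2k+1}(\tfrac{x+1}{2})\,t^{2k+1}/(2k+1)!$ (or the appropriate ordinary-type moment generating series in a variable that tracks $y^k=b_k$) starting from the defining generating function \eqref{1.1} for $B_n(\cdot)$. The reflection formula \eqref{4.7} will be useful here: since $B_n(1-x)=(-1)^nB_n(x)$, the odd-index polynomials $B_{2k+1}$ have a clean parity/symmetry in the shifted variable $\tfrac{x+1}{2}$, which is exactly why the substitution $x\mapsto\tfrac{x+1}{2}$ was made. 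The goal of this first step is to get the series into the simple-continued-fraction (S-fraction or regular $C$-fraction) shape on the left-hand side of \eqref{3.15}, namely $1/(1-\alpha_1 t/(1-\alpha_2 t/(1-\cdots)))$, with explicit coefficients $\alpha_m=\alpha_m(x)$.

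The second step is to identify the $\alpha_m$. I expect these to come from a known continued-fraction expansion of a polygamma-type function, as the abstract and title advertise; the natural candidate is an expansion of $\psi'$ or a ratio of such, specialized so that its Taylor coefficients are the $B_{2k+1}(\tfrac{x+1}{2})$. Once the $\alpha_m(x)$ are in hand, I would apply the even-contraction identity \eqref{3.15} of Lemma~\ref{lem:3.6} to convert the S-fraction into a $J$-fraction $1/(1-\alpha_1 t-\alpha_1\alpha_2 t^2/(1-(\alpha_2+\alpha_3)t-\cdots))$. Matching this against \eqref{3.9} gives, after accounting for the leading coefficient $c_0=b_0=B_1(\tfrac{x+1}{2})=\tfrac{x}{2}$, the identifications $-s_n = $ (a linear combination $\alpha_{2n}+\alpha_{2n+1}$ shifted by the $s_0=\alpha_1$ term) and $t_{n} = \alpha_{2n-1}\alpha_{2n}$. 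Comparing with the target formulas, this forces $\alpha_{2n-1}\alpha_{2n}=-\tau_n=-\tfrac{n^4(x^2-n^2)}{4(2n+1)(2n-1)}$ and the telescoping sums of consecutive $\alpha$'s to collapse to $-\sigma_n=\tfrac{x^2-1}{4}-\binom{n+1}{2}$. Since Lemma~\ref{lem:3.4} asserts that the $s_n,t_n$ appearing in a $J$-fraction \eqref{3.9} are literally the recurrence coefficients of \eqref{3.6}, and since \eqref{4.2} is exactly \eqref{3.6} with $\sigma_n=s_n$ and $\tau_n=-t_n$, this completes the proof once the matching is verified.

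The main obstacle will be the very first step: producing the closed-form S-fraction coefficients $\alpha_m(x)$ for the sequence $B_{2k+1}(\tfrac{x+1}{2})$. This is where the polygamma machinery enters, and getting the $x$-dependence right — so that the product $\alpha_{2n-1}\alpha_{2n}$ factors as the clean $(x^2-n^2)$-bearing expression in \eqref{4.3} and the sums $\alpha_{2n}+\alpha_{2n+1}$ telescope to the quadratic-in-$n$ form of $\sigma_n$ — is the delicate part. I would guard against the positivity caveat raised after Corollary~\ref{cor:3.3} by passing, if necessary, to a rotated sequence $\widetilde b_k=i^k b_k$ (as in the $\widetilde B_k$ discussion) so that the moment/orthogonality framework of Lemma~\ref{lem:3.1} applies cleanly, then translating back via Lemma~\ref{lem:2.1}. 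A useful internal consistency check at the end is to specialize: the product formula \eqref{3.10}, combined with the $\tau_n$ just found, must reproduce the Hankel determinant \eqref{1.6} of Theorem~\ref{thm:1.1} (and, after setting the polynomial variable appropriately, the classical Bernoulli-number determinant \eqref{1.5} and Touchard's recurrence \eqref{3.1}); verifying this agreement confirms that the $\sigma_n,\tau_n$ have been identified correctly.
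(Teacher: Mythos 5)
Your overall architecture is the right one and matches the paper's: reduce Theorem~\ref{thm:4.1} via Lemma~\ref{lem:3.4} to a $J$-fraction expansion of the \emph{ordinary} (formal/asymptotic) generating function $\sum_{k}B_{2k+1}(\tfrac{x+1}{2})z^{2k}$, and obtain that expansion from a known continued fraction for a polygamma-type function. (Your first instinct, the exponential generating function coming from \eqref{1.1}, is indeed a dead end, since Lemma~\ref{lem:3.4} requires the ordinary generating series; what is actually needed is the asymptotic expansion of $\psi'$, which the paper records as Lemma~\ref{lem:4.3}: the generating function equals $\tfrac{1}{2z^{2}}\big(\psi'(\tfrac{1}{z}+\tfrac{1-x}{2})-\psi'(\tfrac{1}{z}+\tfrac{1+x}{2})\big)$. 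You do gesture at this, so that part is fine.)

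The genuine gap is in the concrete mechanism you propose for the key step: first produce an S-fraction $1/(1-\alpha_{1}t/(1-\alpha_{2}t/\cdots))$ with closed-form $\alpha_{m}(x)$ and then apply the even contraction \eqref{3.15}. This route cannot succeed for this sequence, because the contraction relations force the $\alpha_{m}$ uniquely from the target data: $\alpha_{1}=-\sigma_{0}=\tfrac{x^{2}-1}{4}$, $\alpha_{2n-1}\alpha_{2n}=-\tau_{n}$, and $\alpha_{2n}+\alpha_{2n+1}=-\sigma_{n}$. Solving successively gives $\alpha_{2}=-\tfrac{1}{3}$, then $\alpha_{3}=\tfrac{3x^{2}-11}{12}$, then $\alpha_{4}=-\tfrac{16(x^{2}-4)}{5(3x^{2}-11)}$, and so on: the coefficients are rational functions of $x$ with unboundedly complicated denominators, so there is no usable closed-form S-fraction to contract, and your ``delicate part'' is in fact an impasse. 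The paper avoids this entirely by invoking Ramanujan's continued fraction \eqref{4.8} for the difference of two trigamma-type series; that fraction is already of $J$-type (period-two structure built into its partial numerators and denominators), and only an equivalence transformation \eqref{3.13} with $r_{m}=z^{2}/(4(2m-1))$ is needed to bring it to the normalized form \eqref{3.9}, after which the coefficients are read off as $\sigma_{n}$ and $\tau_{n}$. (The S-fraction-plus-contraction route you describe is essentially what the paper \emph{does} use later for the Euler case $\nu=0$ in Lemma~\ref{lem:5.3}, where the Lange expansion \eqref{5.18} happens to be of the required S-type; it is simply not available in the Bernoulli case.) Your closing consistency checks against \eqref{1.6}, \eqref{1.5}, and \eqref{3.1} are sound, as is your handling of the positivity caveat.
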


Since $b_0=B_1(\frac{x+1}{2})=\frac{x}{2}$ (see Table~2), Lemma~\ref{lem:3.4}
with $c_0=x/2$, $s_j=\sigma_j$, and $t_j=-\tau_j$ immediately gives 
Theorem~\ref{thm:1.1} as a corollary.

By Lemma~\ref{lem:3.4}, for the proof of Theorem~\ref{thm:4.1} it suffices to
prove the following lemma. On the left-hand side we will have a formal power
series which could also be interpreted as an asymptotic expansion.

\begin{lemma}\label{lem:4.2}
We have the continued fraction expansion
\begin{equation}\label{4.4}
\sum_{k=0}^{\infty}B_{2k+1}(\tfrac{x+1}{2})z^{2k}
=\cfrac{\tfrac{x}{2}}{1+\sigma_0z^2+\cfrac{\tau_1z^4}{1+\sigma_1z^2+\cfrac{\tau_2z^4}{1+\sigma_2z^2+\ddots}}}.
\end{equation}
\end{lemma}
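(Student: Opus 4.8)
The plan is to establish the continued fraction expansion \eqref{4.4} by relating it to a known continued fraction for a polygamma-type function, then applying the even-contraction machinery from Lemma~\ref{lem:3.6}. The keyword "polygamma" in the abstract, together with the explicit mention of polygamma functions, strongly suggests that the left-hand side of \eqref{4.4} is the (formal or asymptotic) expansion of a digamma/polygamma expression. Indeed, recall the classical identity relating odd-index Bernoulli polynomials to derivatives of $\cot$ or to the Hurwitz zeta function; the generating function $\sum_k B_{2k+1}(u)z^{2k}$ should be expressible in closed form. I would first identify this closed form explicitly, most likely as a combination of $\psi$-functions (or a $\cot$-type function) evaluated at arguments depending on $x$ and $z$.

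First I would derive the closed-form generating function. Starting from the defining generating function \eqref{1.1} for $B_n(x)$, namely $\frac{te^{ut}}{e^t-1}=\sum_n B_n(u)\frac{t^n}{n!}$, I would extract the odd-index terms by the standard symmetrization $\tfrac{1}{2}\big(f(t)-f(-t)\big)$ and substitute $u=\frac{x+1}{2}$, $t\mapsto$ an appropriate multiple of $z$ so that $\sum_k B_{2k+1}(\frac{x+1}{2})\frac{t^{2k+1}}{(2k+1)!}$ emerges; dividing by $t$ and rescaling converts factorial coefficients into the plain power series $\sum_k B_{2k+1}(\frac{x+1}{2})z^{2k}$ appearing in \eqref{4.4}. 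The reflection formula \eqref{4.7} confirms that only odd indices survive with the symmetric choice of argument, which is why $\frac{x+1}{2}$ is the natural variable. The outcome should be a hyperbolic-cotangent or $\psi$-function expression in $x$ and $z$ whose continued fraction is either classically known or reducible to the Gauss continued fraction for ratios of hypergeometric functions.

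Next I would invoke a known Jacobi/Stieltjes-type continued fraction for that closed-form function. The target right-hand side of \eqref{4.4} is already a J-fraction in the variable $z^2$ with partial numerators $\tau_k z^4$ and partial denominators $1+\sigma_k z^2$; by Lemma~\ref{lem:3.6}, in particular by the even-contraction identity \eqref{3.15}, such a J-fraction arises as the even contraction of a \emph{simpler} regular continued fraction $\cfrac{1}{1-\cfrac{\alpha_1 z}{1-\cfrac{\alpha_2 z}{1-\ddots}}}$ with suitable coefficients $\alpha_m$. Matching \eqref{3.15} to \eqref{4.4} yields the telescoping relations $\alpha_1=-\sigma_0/(x/2)$-type equations, namely $\alpha_{2k-1}+\alpha_{2k}=-\sigma_k$ and $\alpha_{2k-1}\alpha_{2k}=-\tau_k$ (up to signs and the overall $z\mapsto z^2$ rescaling and the $c_0=x/2$ normalization). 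I would solve these for the $\alpha_m$ and recognize the resulting simple continued fraction as a classical expansion, most plausibly the Gauss continued fraction for a ratio ${}_2F_1$ or the Stieltjes continued fraction attached to the digamma/$\cot$ function. The factored form of $\tau_n=\frac{n^4(x^2-n^2)}{4(2n+1)(2n-1)}$ is exactly the shape one gets from products of consecutive $\alpha$'s built from linear-in-$n$ factors, which is a strong structural hint that the $\alpha_m$ are (rational) linear functions of $m$.

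The hard part will be pinning down the \emph{exact} simple continued fraction whose even contraction reproduces \eqref{4.4}, and verifying the coefficient matching rigorously rather than merely up to the first few orders. There are two delicate points: (i) correctly tracking the overall normalizing factor $x/2=c_0$ and the $z\mapsto z^2$ reindexing so that \eqref{3.15} maps onto \eqref{4.4} with no sign or scaling discrepancy; and (ii) justifying the identity at the level of formal power series (or asymptotic expansions), since \eqref{4.4} need not converge in the usual sense — here I would rely on the remark preceding Lemma~\ref{lem:4.2} that both sides are to be read as formal power series, so that equality of all Taylor coefficients in $z$ suffices. Once the underlying simple continued fraction is identified from the literature on polygamma or cotangent expansions and the contraction computation of Lemma~\ref{lem:3.6} is applied, the stated $\sigma_n$ and $\tau_n$ in \eqref{4.3} should fall out directly, completing the proof of Lemma~\ref{lem:4.2} and hence, via Lemma~\ref{lem:3.4}, of Theorem~\ref{thm:4.1}.
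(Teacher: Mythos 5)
Your high-level plan (express the generating function in a polygamma-type closed form, then import a known continued fraction) is the right one, but both of your concrete steps have gaps. First, the closed form cannot be obtained the way you describe. The series $\sum_k B_{2k+1}(\tfrac{x+1}{2})z^{2k}$ is an \emph{ordinary} (divergent) power series, and no substitution or rescaling of $t$ in the exponential generating function $\tfrac{te^{ut}}{e^t-1}=\sum_n B_n(u)\tfrac{t^n}{n!}$ can remove the factors $1/(2k+1)!$; that would require a Borel--Laplace transform, not a change of variables. The paper instead reads the ordinary series off the classical asymptotic expansion of $\log\Gamma(z+x)$: differentiating twice gives $\psi'(z+x)=\tfrac1z-\tfrac{x-1/2}{z^2}+\sum_{n\ge2}(-1)^nB_n(x)z^{-n-1}$, with plain $B_n(x)$ and no factorials, and subtracting the two evaluations at $\tfrac{1\pm x}{2}$ together with the reflection formula \eqref{4.7} yields exactly \eqref{4.5}. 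You need some version of this step; the exponential generating function route does not produce the left-hand side of \eqref{4.4}.

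Second, the contraction machinery of Lemma~\ref{lem:3.6} is the wrong tool here. If you actually solve your matching equations $\alpha_1=-\sigma_0$, $\alpha_{2n-1}\alpha_{2n}=-\tau_n$, $\alpha_{2n}+\alpha_{2n+1}=-\sigma_n$ for the putative S-fraction coefficients, you get $\alpha_1=\tfrac{x^2-1}{4}$, $\alpha_2=-\tfrac13$, $\alpha_3=\tfrac{3x^2-11}{12}$, $\alpha_4=\tfrac{16(4-x^2)}{5(3x^2-11)}$, and so on: these are not linear in the index, they degrade rapidly, and they do not correspond to any classical Stieltjes fraction, so the ``recognize a classical expansion'' step would fail. (Contraction \emph{is} used in the paper, but for the Euler cases of Section~5, where Lange's S-fractions exist.) For the Bernoulli case the paper proceeds differently: it expands each trigamma value as $\sum_{k\ge0}1/(z+k)^2$ and invokes Ramanujan's continued fraction \eqref{4.8} for $\sum_k(s-b+2k+1)^{-2}-\sum_k(s+b+2k+1)^{-2}$, which is already in J-fraction shape; a single equivalence transformation \eqref{3.13} with $r_m=z^2/(4(2m-1))$ then produces \eqref{4.4} with the stated $\sigma_n$ and $\tau_n$. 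Without identifying that (or an equivalent) source continued fraction, your argument does not close.
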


An important tool for proving Lemma~\ref{lem:4.2}, as well as the results 
in the next section, is the 
{\it polygamma function\/}. For an integer $n\geq 0$ it is defined by
\[
\psi^{(n)}(z)
:=\frac{\mathrm{d}^{n+1}}{\mathrm{d}z^{n+1}}\big(\log\Gamma(z)\big),
\]
where $\Gamma(z)$ is the gamma function. For $n=0$ we have
$\psi^{(0)}(z)=\psi(z)=\Gamma'(z)/\Gamma(z)$, the well-known and important
{\it digamma function\/}, and $\psi^{(1)}(z)=\psi'(z)$ is sometimes called the
{\it trigamma function\/}.

\begin{lemma}\label{lem:4.3}
We have the formal power series
\begin{equation}\label{4.5}
\sum_{k=0}^{\infty}B_{2k+1}(\tfrac{x+1}{2})z^{2k} = \frac{1}{2z^{2}}
\left(\psi'\big(\tfrac{1}{z}+\tfrac{1-x}{2}\big)
-\psi'\big(\tfrac{1}{z}+\tfrac{1+x}{2}\big)\right).
\end{equation}
\end{lemma}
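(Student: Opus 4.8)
The target identity expresses the generating function $\sum_{k\ge 0} B_{2k+1}(\tfrac{x+1}{2})z^{2k}$ as a difference of trigamma functions. My plan is to work from the right-hand side back to the left, exploiting the standard series expansion of $\psi'$ at infinity, which is precisely a Bernoulli-number asymptotic series. The key classical fact is the asymptotic (formal) expansion
\[
\psi'(w)\sim\frac{1}{w}+\frac{1}{2w^2}+\sum_{k=1}^{\infty}\frac{B_{2k}}{w^{2k+1}},
\]
or, more usefully here, the shifted version $\psi'(w+a)\sim\sum_{n\ge 0}(-1)^n\binom{?}{?}\cdots$; the cleanest route is to use the Hurwitz-zeta/Bernoulli-polynomial form of the Euler--Maclaurin expansion, namely that $\psi^{(1)}(w+a)$ has a formal expansion whose coefficients are governed by $B_n(a)$. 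I will set $w=1/z$ and track the two shifts $a=\tfrac{1-x}{2}$ and $a=\tfrac{1+x}{2}$.

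First I would record the formal expansion of $\psi'(\tfrac1z+a)$ in powers of $z$. Differentiating the standard asymptotic series for $\psi(\tfrac1z+a)$ — which is the Bernoulli-polynomial Euler--Maclaurin series — gives an expansion of the form $\psi'(\tfrac1z+a)=\sum_{n\ge 0} c_n(a)\,z^{n}$ whose coefficients are, up to sign and factorials, the Bernoulli polynomials $B_{n}(a)$ evaluated at the shift. Second, I would substitute the two specific shifts $a_\pm=\tfrac{1\mp x}{2}$, so that $\tfrac{1-x}{2}=1-\tfrac{1+x}{2}$, and then invoke the reflection formula \eqref{4.7}, $B_n(1-y)=(-1)^nB_n(y)$, with $y=\tfrac{1+x}{2}$. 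This reflection is exactly what makes the \emph{difference} $\psi'(\tfrac1z+\tfrac{1-x}{2})-\psi'(\tfrac1z+\tfrac{1+x}{2})$ collapse: the even-index Bernoulli-polynomial contributions cancel while the odd-index ones $B_{2k+1}(\tfrac{1+x}{2})$ survive and double. Third, after pulling out the prefactor $\tfrac{1}{2z^2}$, I would match powers of $z$ to confirm that what remains is $\sum_{k\ge 0}B_{2k+1}(\tfrac{x+1}{2})z^{2k}$, using $B_{2k+1}(\tfrac{x+1}{2})=B_{2k+1}(\tfrac{1+x}{2})$.

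The main obstacle is purely bookkeeping: getting the indexing, the factor of $1/n!$ from differentiating $\log\Gamma$, and the power-of-$z$ shift from the prefactor $\tfrac{1}{2z^2}$ all to line up so that the $z^{2k}$ coefficient is exactly $B_{2k+1}(\tfrac{x+1}{2})$ with the right constant. In particular I must check the two lowest-order terms by hand (the $z^0$ term should reproduce $B_1(\tfrac{x+1}{2})=\tfrac{x}{2}$, matching $b_0$), since the asymptotic series for $\psi'$ begins with non-Bernoulli leading terms $1/w+1/(2w^2)$ that I must verify cancel or reorganize correctly under the reflection-induced difference. I would treat both sides strictly as formal power series in $z$, so no convergence analysis is needed; the identity \eqref{4.5} is an equality of formal expansions, and the reflection-driven cancellation is the conceptual heart of the argument.
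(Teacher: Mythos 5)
Your plan is correct and follows essentially the same route as the paper: expand $\psi'(\tfrac1z+a)$ via the Bernoulli-polynomial asymptotic series obtained by differentiating the classical expansion of $\log\Gamma$, apply the reflection formula \eqref{4.7} with $\tfrac{1-x}{2}=1-\tfrac{1+x}{2}$ so that even-index terms cancel and odd-index terms double, and verify the constant term equals $B_1(\tfrac{x+1}{2})=\tfrac{x}{2}$. The remaining work is exactly the bookkeeping you identify, and it goes through as you anticipate.
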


\begin{proof}
We use the following well-known complete asymptotic expansion, valid for
$\left|\arg z\right|<\pi$:
\[
\log\Gamma(z+x)=\left(z+x-\tfrac{1}{2}\right)\log z-z+\frac{\log(2\pi)}{2}+\sum_{n=1}^{\infty}\frac{(-1)^{n+1}B_{n+1}(x)}{n(n+1)z^{n}};
\]
see, e.g., \cite[p.~48, Eq.~(12)]{Er}. Differentiating twice with respect to
$z$, we get
\begin{equation}\label{4.6}
\psi(z+x)=\log z+\frac{x-\frac{1}{2}}{z}
-\sum_{n=1}^{\infty}\frac{(-1)^{n+1}B_{n+1}(x)}{(n+1)z^{n+1}},
\end{equation}
and 
\[
\psi'(z+x)=\frac{1}{z}-\frac{x-\frac{1}{2}}{z^{2}}
+\sum_{n=2}^{\infty}\frac{(-1)^{n}B_{n}(x)}{z^{n+1}}.
\]
Therefore, 
\begin{align*}
\psi'(z+\tfrac{1+x}{2})-\psi'(z+\tfrac{1-x}{2}) 
& =\frac{1}{z}-\frac{x/2}{z^{2}}
+\sum_{n=2}^{\infty}\frac{(-1)^{n}B_{n}(\frac{1+x}{2})}{z^{n+1}}\\
&\qquad-\left(\frac{1}{z}-\frac{-x/2}{z^{2}}
+\sum_{n=2}^{\infty}\frac{(-1)^{n}B_{n}(\frac{1-x}{2})}{z^{n+1}}\right).\\
 & =-\frac{x}{z^{2}}+\sum_{n=2}^{\infty}\frac{(-1)^{n}}{z^{n+1}}
\left(B_{n}(\tfrac{1+x}{2})-B_{n}(\tfrac{1-x}{2})\right).
\end{align*}
Using the reflection formula \eqref{4.7}, we see
\[
B_{n}(\tfrac{1-x}{2})=B_{n}(1-\tfrac{1+x}{2})=(-1)^{n}B_{n}(\tfrac{1+x}{2}).
\]
This, finally, implies 
\[
\frac{1}{2}\left(\psi'(z+\tfrac{1+x}{2})-\psi'(z+\tfrac{1-x}{2})\right)
=-\frac{x}{2z^{2}}-\sum_{n=1}^{\infty}\frac{B_{2n+1}(\frac{1+x}{2})}{z^{2n+2}}.
\]
We recall that $B_{1}(\frac{1+x}{2})=\frac{x}{2}$. The change of variables 
$z\mapsto1/z$ then yields
\[
\frac{\psi'(\frac{1}{z}+\frac{1+x}{2})-\psi'(\frac{1}{z}+\frac{1-x}{2})}{2z^{2}}
=-\sum_{n=0}^{\infty}B_{2n+1}(\tfrac{1+x}{2})z^{2n}
\]
which completes the proof.
\end{proof}

\begin{proof}[Proof of Lemma~\ref{lem:4.2}]
We denote the right-hand side of \eqref{4.5} by $F(z)$, where the dependence 
on $x$ is implied. Using the series representation
\[
\psi'(z)=\sum_{k=0}^{\infty}\frac{1}{(z+k)^{2}}
\]
(see, e.g., \cite[Eq.~5.15.1]{DLMF}), we rewrite $F(z)$ as
\begin{equation}\label{4.7a}
2z^2F(z) = \sum_{k=0}^{\infty}
\left(\frac{1}{\left(\frac{1}{z}+\frac{1-x}{2}+k\right)^{2}}
-\frac{1}{\left(\frac{1}{z}+\frac{1+x}{2}+k\right)^{2}}\right),
\end{equation}
Now we use the following continued fraction due to Ramanujan; see
\cite[p.~158]{Be} or \cite[pp.~591--592]{LW}:
\begin{align}
\sum_{k=0}^{\infty}&\frac{1}{(s-b+2k+1)^{2}}
-\sum_{k=0}^{\infty}\frac{1}{(s+b+2k+1)^{2}}\label{4.8}\\
&=\cfrac{b}{1(s^{2}-b^{2}+1)-\cfrac{4(1^{2}-b^{2})1^{4}}{3(s^{2}-b^{2}+5)
-\cfrac{4(2^{2}-b^{2})2^{4}}{5(s^{2}-b^{2}+13)-\ddots}}},\nonumber
\end{align}
where the sequence of constants 1, 5, 13 is given by $(2n(n+1)+1)_{n\geq 0}$.
Dividing both sides of \eqref{4.7a} by 4, we see that the right-hand side of
\eqref{4.7a}, with $s=2/z$ and $b=x$, is identical with the left-hand side of
\eqref{4.8}. Therefore,
\[
\frac{z^{2}F(z)}{2} 
=\cfrac{x}{1\left(\frac{4}{z^{2}}-x^{2}+1\right)-\cfrac{4(1^{2}-x^{2})1^{4}}
{3\left(\frac{4}{z^{2}}-x^{2}+5\right)-\cfrac{4(2^{2}-x^{2})2^{4}}
{5\left(\frac{4}{z^{2}}-x^{2}+13\right)-\ddots}}}.
\]
Using equivalence of continued fractions with \eqref{3.13}, where
$r_{m}= z^{2}/(4(2m-1))$,
we get
\begin{equation}\label{4.9}
\frac{z^{2}F(z)}{2}
=\cfrac{\frac{xz^{2}}{4}}{1+\frac{(1-x^{2})}{4}z^{2}-\cfrac{\frac{(1^{2}-x^{2})1^{4}}{4\cdot1\cdot3}z^{4}}
{1+\frac{(5-x^{2})}{4}z^{2}-\cfrac{\frac{(2^{2}-x^{2})2^{4}}{4\cdot3\cdot5}z^{4}}{1+\frac{(13-x^{2})}{4}z^{2}-\ddots}}}.
\end{equation}
Finally, recalling that
\[
B_{1}(\tfrac{1+x}{2})=\frac{x}{2},\qquad 
-\frac{n^{4}(n^{2}-x^{2})}{4(2n-1)(2n+1)}=\tau_{n}\quad (n\geq 1),
\]
and
\[
\frac{2n(n+1)+1-x^{2}}{4}=\binom{n+1}{2}-\frac{x^{2}-1}{4}=\sigma_{n}\quad
(n\geq 0),
\]
we see that \eqref{4.9} immediately gives \eqref{4.4}, which completes the 
proof.
\end{proof}

Theorems~\ref{thm:1.1} and \ref{thm:4.1} give rise to the natural question
whether there exist similar results for {\it even-index\/} Bernoulli 
polynomials. This question is also related to the fact that there is the known
identity 
\[
H_n\big(B_{2k}(\tfrac{1}{2})\big) = \prod_{\ell=1}^n
\left(\frac{\ell^4(2\ell-1)^4}{(4\ell-3)(4\ell-1)^2(4\ell+1)}\right)^{n-\ell+1},
\]
which is due to Chen \cite[Eq.~(41)]{Ch}. However, as far as Bernoulli
polynomials are concerned, we have
\begin{align*}
H_1\big(B_{2k}(\tfrac{x+1}{2})\big) &= -\tfrac{1}{12}x^2+\tfrac{1}{45},\\
H_2\big(B_{2k}(\tfrac{x+1}{2})\big) 
&= -\tfrac{1}{540}x^6+\tfrac{97}{18\,900}x^4-\tfrac{11}{4\,725}x^2
+\tfrac{16}{55\,125},
\end{align*}
and in general $H_n\big(B_{2k}(\tfrac{x+1}{2})\big)$ seems to be an irreducible
polynomial of degree $n(n+1)$. We did not pursue this question any further.

For some additional comments on $H_n\big(B_{2k+\nu}(\tfrac{x+1}{2})\big)$ for
other positive integers $\nu$, see the end of Section~6.

\section{The Euler polynomial cases}

For the Euler numbers and polynomials, defined in \eqref{1.3}, Al-Salam and 
Carlitz showed in \cite[Eq.~(4.2), (5.2)]{AC} that
\begin{equation}\label{5.1}
H_n(E_k) = (-1)^{\binom{n+1}{2}}\prod_{\ell=1}^n\big(\ell!\big)^2,\qquad
H_n(E_k(x)) = (-\tfrac{1}{4})^{\binom{n+1}{2}}\prod_{\ell=1}^n\big(\ell!\big)^2.
\end{equation}
By \eqref{2.3}, the second identity in \eqref{5.1} follows 
from the first one. Furthermore, in
analogy to the orthogonal polynomials $R_n(x)$ in \eqref{3.1}, Al-Salam and
Carlitz obtained the monic orthogonal polynomial with respect to the Euler
numbers: Let $Q_0(y)=1$, $Q_1(y)=~y$, and
\begin{equation}\label{5.2}
Q_{n+1}(y) = y\,Q_n(y) + n^2\,Q_{n-1}(y)\qquad (n\geq 1).
\end{equation}
Then
\[
y^rQ_n(y)\bigg|_{y^k=E_k} = 0\qquad(0\leq r\leq n-1);
\]
see also Corollary~\ref{cor:3.2}. We also note that \eqref{5.2} and 
Corollary~\ref{cor:3.3} give the first identity in \eqref{5.1}, after some 
easy manipulation. All 
this was recently extended by the second author and Shi to higher-order Euler
polynomials, of which the ordinary Euler polynomials are a special case.

The main results of this section are analogous to Theorems~\ref{thm:4.1} and
\ref{thm:1.1} in that we will deal only with even-index polynomials or 
odd-index polynomials. However, in contrast to those results, here we will have
three different but related cases.

\begin{theorem}\label{thm:5.1}
For $\nu=0, 1, 2$, let $c_k^{(\nu)}:=E_{2k+\nu}(\frac{x+1}{2})$, and let
$\big(q_n^{(\nu)}(y;x)\big)_{n\geq 0}$ be the sequence of  monic polynomials
orthogonal in $y$ with respect to the sequence $(c_k^{(\nu)})$, that is,
\begin{equation}\label{5.3}
y^{r}q_{n}^{(\nu)}(y;x)\bigg|_{y^k=c_k^{(\nu)}}=0\qquad (0\leq r\leq n-1).
\end{equation}
Then we have $q_{0}^{(\nu)}=1$, $q_{1}^{(\nu)}(y;x)=y+\sigma_{0}^{(\nu)}$,
and for $n\geq 1$,
\begin{equation}\label{5.4}
q_{n+1}^{(\nu)}(y;x)
=(y+\sigma_{n}^{(\nu)})q_{n}^{(\nu)}(y;x)+\tau_{n}^{(\nu)}q_{n-1}^{(\nu)}(y;x),
\end{equation}
where for $\nu=0, 1, 2$,
\begin{equation}\label{5.5}
\sigma_{n}^{(\nu)}=(2n+1)(n+\tfrac{\nu}{2})-\frac{x^{2}-1}{4},\qquad
\tau_{n}^{(\nu)}=\frac{n^{2}}{4}\left(x^{2}-(2n+\nu-1)^{2}\right).
\end{equation}
\end{theorem}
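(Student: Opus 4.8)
The plan is to mirror Section~4 exactly: by Lemma~\ref{lem:3.4} it suffices, for each $\nu\in\{0,1,2\}$, to expand the (ordinary) generating function $\sum_{k\ge 0}E_{2k+\nu}(\tfrac{x+1}{2})z^{2k}$ as a J-fraction of the form \eqref{3.9}; then \eqref{3.10} identifies the $s_n$ and $t_n$ with the recurrence coefficients, and matching them against \eqref{5.5} (taking $t_n=-\tau_n^{(\nu)}$, exactly as in the Bernoulli case) proves \eqref{5.4}. So the whole theorem reduces to producing three explicit J-fractions. As a guiding cross-check one may note that the \emph{exponential} generating functions are clean: summing over even indices gives $\cosh(\tfrac{xt}{2})/\cosh(\tfrac t2)$ and over odd indices $\sinh(\tfrac{xt}{2})/\cosh(\tfrac t2)$, which makes the odd/even integer structure of the factors in $\tau_n^{(\nu)}$ plausible in advance.

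First I would establish the Euler analogue of Lemma~\ref{lem:4.3}. The digamma expansion \eqref{4.6} already exhibits Bernoulli polynomials as its coefficients, so forming the difference $\psi(z+\tfrac{w+1}{2})-\psi(z+\tfrac w2)$ and applying \eqref{2.0} converts the coefficient differences $B_{n+1}(\tfrac{w+1}{2})-B_{n+1}(\tfrac w2)$ into Euler polynomials, producing a formal expansion with coefficients $E_n(w)$. Setting $w=\tfrac{x+1}{2}$ and using the Euler reflection formula $E_n(1-w)=(-1)^nE_n(w)$ (the Euler counterpart of \eqref{4.7}) on the $x\mapsto -x$ companion separates the even- and odd-index parts. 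Rewriting each resulting digamma difference as an alternating series $\beta(a)=\sum_{m\ge 0}(-1)^m/(a+m)$, I expect the $\nu=0$ series to emerge as $\beta(s+\tfrac x2)+\beta(s-\tfrac x2)$ and the $\nu=1$ series as $\beta(s+\tfrac x2)-\beta(s-\tfrac x2)$, with a common center $s$ affine in the expansion variable and half-offset $\tfrac x2$.

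Next I would convert these into continued fractions using the first-order (digamma) companion of the Ramanujan continued fraction \eqref{4.8}, available in \cite{Be} and \cite{LW}, which expands precisely such sums and differences of $\beta(s\pm b)$ with $b=\tfrac x2$. An equivalence transformation \eqref{3.13}, with a multiplier $r_m$ rational in $z^2$ chosen as in the passage leading to \eqref{4.9}, should normalize each continued fraction to the J-fraction shape \eqref{3.9}. The numerators $(\,\cdot\,)^2-b^2$ of the Ramanujan fraction, where the sum versus the difference flips the characteristic integers from odd to even, then produce exactly the factors $x^2-(2n-1)^2$ for $\nu=0$ and $x^2-(2n)^2$ for $\nu=1$ in $\tau_n^{(\nu)}$, while the linear denominators produce $\sigma_n^{(\nu)}$. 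Reading off and simplifying the coefficients finishes these two cases.

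The case $\nu=2$ is where I expect the real difficulty, since $c_k^{(2)}=E_{2k+2}(\tfrac{x+1}{2})=c_{k+1}^{(0)}$ is nothing but the $\nu=0$ sequence with its first term deleted, and the Hankel data of a sequence and of its shift genuinely differ. There is no clean standalone $\beta$-identity for $E_{2k+2}$; instead the $\nu=2$ generating function is $(\,f_0(z)-1)/z^2$, where $f_0$ is the $\nu=0$ series, so the natural tool is the shift relation between $H_n(c_k)$ and $H_n(c_{k+1})$ developed in Section~\ref{sec:ShiftedSequences} (equivalently, the effect on the J-fraction of peeling off one level, a Christoffel-type transform $d\mu\mapsto y\,d\mu$ of the underlying measure). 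The delicate point will be to show that this transform sends the explicit pair $(\sigma_n^{(0)},\tau_n^{(0)})$ to the equally explicit $(\sigma_n^{(2)},\tau_n^{(2)})$, so that all three families collapse into the single uniform formula \eqref{5.5}; low-order checks, such as comparing $c_0^{(\nu)}=E_\nu(\tfrac{x+1}{2})$ against $q_1^{(\nu)}=y+\sigma_0^{(\nu)}$, would be used to pin down the normalizations.
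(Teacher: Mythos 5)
Your proposal is correct in outline and, for $\nu=0$ and $\nu=1$, essentially coincides with the paper's proof: the reduction via Lemma~\ref{lem:3.4}, the passage from \eqref{4.6} through \eqref{2.0} and the reflection formula to digamma representations of $F^{(0)}$ and $F^{(1)}$ (the paper's Lemma~\ref{lem:5.4}), and the use of known continued fractions for $\beta(s+\tfrac{x}{2})\pm\beta(s-\tfrac{x}{2})$ followed by an equivalence transformation \eqref{3.13} are all exactly what the paper does; the only cosmetic difference is that the paper cites Lange's entries (T.4)/(T.6) and (U.4)/(U.7) where you invoke the Ramanujan $\beta$-function fractions from \cite{Be}, \cite{LW} --- these are the same fractions. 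Where you genuinely diverge is $\nu=2$. You correctly identify $c_k^{(2)}=c_{k+1}^{(0)}$ as the shifted sequence and propose the Christoffel-transform/shift machinery; this route does work and is precisely what the paper develops as an \emph{alternative} in Section~\ref{sec:ShiftedSequences} (Propositions~\ref{prop:6.2} and~\ref{prop:6.3} give $\tau_n^{(2)}$ and $\sigma_n^{(2)}$ once $d_n^{(0)}$ is computed as in Lemma~\ref{lem:6.4}), but it is the longer path and requires importing Hankel-determinant shift identities from the literature. The paper's own argument is slicker and stays entirely inside the continued-fraction framework: the Stieltjes fraction \eqref{5.19} for $F^{(0)}$ has \emph{two} canonical contractions, the even one \eqref{3.15} yielding the J-fraction for $F^{(0)}$ and the odd one \eqref{3.16} yielding $1$ plus a J-fraction whose tail, divided by $z^2$, is exactly $F^{(2)}(z)=(F^{(0)}(z)-1)/z^2$. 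So your remark that ``there is no clean standalone identity for $E_{2k+2}$'' is true but beside the point: no new identity is needed, only the other contraction of the one you already have. You would do well to fold that observation into your $\nu=2$ step, which would eliminate the ``delicate point'' you flag.
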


Now Lemma~\ref{lem:3.4} with $c_0=c_0^{(\nu)}=E_{\nu}(\frac{x+1}{2})$, with
$s_j=\sigma_j^{(\nu)}$, and $t_j=-\tau_j^{(\nu)}$ immediately gives the 
following Hankel determinants.

\begin{corollary}\label{cor:5.2}
Let $c_k^{(\nu)}=E_{2k+\nu}(\frac{x+1}{2})$ for $\nu=0, 1, 2$. Then for all
$n\geq 0$ we have
\begin{equation}\label{5.6}
H_n(c_k^{(\nu)})=(-1)^{\binom{n+1}{2}}E_{\nu}(\tfrac{x+1}{2})^{n+1}
\prod_{\ell=1}^n\left(\tau_{\ell}^{(\nu)}\right)^{n+1-\ell},
\end{equation}
or more explicitly,
\begin{align}
H_n(c_k^{(0)}) &= (-1)^{\binom{n+1}{2}}\prod_{\ell=1}^n
\left(\frac{\ell^2}{4}\big(x^2-(2\ell-1)^2\big)\right)^{n+1-\ell},\label{5.7}\\
H_n(c_k^{(1)}) &= (-1)^{\binom{n+1}{2}}\left(\frac{x}{2}\right)^{n+1}
\prod_{\ell=1}^n\left(\frac{\ell^2}{4}\big(x^2-(2\ell)^2\big)\right)^{n+1-\ell},\label{5.8}\\
H_n(c_k^{(2)}) &= (-1)^{\binom{n+1}{2}}\left(\frac{x^{2}-1}{4}\right)^{n+1}
\prod_{\ell=1}^{n}\left(\frac{\ell^{2}}{4}(x^{2}-(2\ell+1)^{2})\right)^{n+1-\ell}.\label{5.9}
\end{align}
\end{corollary}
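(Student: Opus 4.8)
The plan is to derive this corollary directly from Theorem~\ref{thm:5.1} by feeding its three-term recurrence into the Hankel determinant formula of Lemma~\ref{lem:3.4}. First I would observe that the monic orthogonal polynomials $q_n^{(\nu)}(y;x)$ of Theorem~\ref{thm:5.1} satisfy the recurrence \eqref{5.4}, which is exactly of the form \eqref{3.6} under the identifications $s_n=\sigma_n^{(\nu)}$ and $t_n=-\tau_n^{(\nu)}$. By the standard correspondence between the three-term recurrence \eqref{3.6} and the J-fraction \eqref{3.9}, the generating function $\sum_{k\ge 0}c_k^{(\nu)}t^k$ then admits the expansion \eqref{3.9} with these same $s_j,t_j$ and with $c_0=c_0^{(\nu)}=E_\nu(\tfrac{x+1}{2})$, so that Lemma~\ref{lem:3.4} applies verbatim. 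The Hankel determinants here are polynomials in $x$ rather than positive numbers, but \eqref{3.10} is a purely formal identity in the J-fraction coefficients, so the positivity hypotheses play no role, exactly as in the remark following Corollary~\ref{cor:3.3}.

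Next I would substitute into the product formula \eqref{3.10}. Writing $t_\ell=-\tau_\ell^{(\nu)}$ and collecting the exponents $n+1-\ell$, the only bookkeeping needed is the sign: the total power of $-1$ is $\sum_{\ell=1}^n(n+1-\ell)=\sum_{m=1}^n m=\binom{n+1}{2}$. This yields
\[
H_n(c_k^{(\nu)})=\big(E_\nu(\tfrac{x+1}{2})\big)^{n+1}\prod_{\ell=1}^n\big(-\tau_\ell^{(\nu)}\big)^{n+1-\ell}
=(-1)^{\binom{n+1}{2}}\big(E_\nu(\tfrac{x+1}{2})\big)^{n+1}\prod_{\ell=1}^n\big(\tau_\ell^{(\nu)}\big)^{n+1-\ell},
\]
which is precisely \eqref{5.6}.

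Finally, to obtain the three explicit forms \eqref{5.7}--\eqref{5.9} I would simply evaluate the leading constant $E_\nu(\tfrac{x+1}{2})$ and insert $\tau_\ell^{(\nu)}=\tfrac{\ell^2}{4}\big(x^2-(2\ell+\nu-1)^2\big)$ from \eqref{5.5} for each $\nu$. From Table~2 one computes $E_0(\tfrac{x+1}{2})=1$, $E_1(\tfrac{x+1}{2})=\tfrac{x+1}{2}-\tfrac12=\tfrac{x}{2}$, and $E_2(\tfrac{x+1}{2})=\big(\tfrac{x+1}{2}\big)^2-\tfrac{x+1}{2}=\tfrac{(x+1)(x-1)}{4}=\tfrac{x^2-1}{4}$, while the offset $2\ell+\nu-1$ becomes $2\ell-1$, $2\ell$, and $2\ell+1$, respectively. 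Substituting these into \eqref{5.6} produces \eqref{5.7}, \eqref{5.8}, and \eqref{5.9} directly.

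I expect essentially no obstacle at this stage: all the substantive work is already packaged in Theorem~\ref{thm:5.1} (the recurrence coefficients) and Lemma~\ref{lem:3.4} (the determinant formula), so the corollary is genuinely a mechanical consequence. The only places where a slip is possible are the sign exponent $\binom{n+1}{2}$ and the small factorization $E_2(\tfrac{x+1}{2})=\tfrac{x^2-1}{4}$, both of which I have just verified.
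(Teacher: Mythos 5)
Your proposal is correct and matches the paper's own derivation exactly: the paper likewise obtains \eqref{5.6} by applying Lemma~\ref{lem:3.4} with $c_0=E_\nu(\tfrac{x+1}{2})$, $s_j=\sigma_j^{(\nu)}$, $t_j=-\tau_j^{(\nu)}$, with the sign $(-1)^{\binom{n+1}{2}}$ coming from the exponent sum, and then specializes $E_\nu(\tfrac{x+1}{2})$ and $\tau_\ell^{(\nu)}$ to get \eqref{5.7}--\eqref{5.9}. No gaps.
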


To obtain \eqref{5.7}--\eqref{5.9} from \eqref{5.6}, we only need to  notice
that
\[
E_{0}(\tfrac{x+1}{2})=1,\qquad E_{1}(\tfrac{x+1}{2})=\frac{x}{2}, \qquad
E_{2}(\tfrac{x+1}{2})=\frac{x^2-1}{4};
\]
see Table~2.

The proof of Theorem~\ref{thm:5.1} will be similar in nature to that of
Theorem~\ref{thm:4.1}. In particular, by Lemma~\ref{lem:3.4} it suffices to 
prove the following lemma.

\begin{lemma}\label{lem:5.3}
For $\nu=0, 1, 2$ we define the formal power series
\begin{equation}\label{5.10}
F^{(\nu)}(z) := \sum_{k=0}^{\infty}E_{2k+\nu}(\tfrac{x+1}{2})z^{2k},
\end{equation}
where the dependence on $x$ is implied. Then we have
\begin{equation}\label{5.10a}
F^{(\nu)}(z)
=\cfrac{E_{\nu}(\tfrac{x+1}{2})}{1+\sigma_{0}^{(\nu)}z^{2}+\cfrac{\tau_{1}^{(\nu)}z^{4}}{1+\sigma_{1}^{(\nu)}z^{2}+\cfrac{\tau_{2}^{(\nu)}z^{4}}{\ddots}}}.
\end{equation}
\end{lemma}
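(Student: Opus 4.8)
The plan is to mirror the proof of Lemma~\ref{lem:4.2} as closely as possible, replacing the trigamma identities of Section~4 by their digamma analogues. The guiding principle is that, just as the odd-index Bernoulli polynomials arose from the \emph{non-alternating} series $\psi'(z)=\sum_{k}(z+k)^{-2}$ (reflecting the $e^{t}-1$ in \eqref{1.1}), the Euler polynomials should arise from the \emph{alternating} series $\sum_{m}(-1)^{m}/(s+a+m)$ (reflecting the $e^{t}+1$ in \eqref{1.3}). Concretely, I would first prove an analogue of Lemma~\ref{lem:4.3}: starting from the asymptotic expansion \eqref{4.6} of $\psi(z+x)$ together with the elementary identity $\sum_{m\ge 0}(-1)^{m}/(s+a+m)=\tfrac12\big(\psi(\tfrac{s+a+1}{2})-\psi(\tfrac{s+a}{2})\big)=\tfrac12\sum_{n\ge 0}E_{n}(1-a)\,s^{-n-1}$, I set $a=\tfrac{1-x}{2}$ and $a=\tfrac{1+x}{2}$ and invoke the Euler analogue of the reflection formula \eqref{4.7}, namely $E_{n}(1-y)=(-1)^{n}E_{n}(y)$, to annihilate one parity. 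The symmetric combination then yields the even-index series $\sum_{k}E_{2k}(\tfrac{1+x}{2})s^{-2k-1}$ (this is $\nu=0$) and the antisymmetric combination the odd-index series $\sum_{k}E_{2k+1}(\tfrac{1+x}{2})s^{-2k-2}$ (this is $\nu=1$); the substitution $s=1/z$ recovers $F^{(0)}$ and $F^{(1)}$, with leading terms $E_{0}=1$ and $E_{1}=\tfrac{x}{2}$ matching the numerators of \eqref{5.10a}.

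For the continued-fraction step I would fuse the two alternating series into one. Writing $u=2s+1$, the two linear factors multiply to $(u+2m)^{2}-x^{2}$, so the $\nu=1$ combination becomes $4x\sum_{m\ge 0}(-1)^{m}\big((u+2m)^{2}-x^{2}\big)^{-1}$ and the $\nu=0$ combination a similar sum carrying $u+2m$ in the numerator. To these I would apply the first-power (digamma) companion of Ramanujan's continued fraction \eqref{4.8}, found in the same references, which expands exactly such alternating sums as a $J$-fraction whose partial numerators and denominators are built from $u^{2}-x^{2}$ and the integers. Exactly as in the passage from \eqref{4.8} to \eqref{4.9}, an equivalence transformation \eqref{3.13} with a suitably chosen $r_{m}$ (the analogue of $r_{m}=z^{2}/(4(2m-1))$) would normalise the leading partial denominators to $1$ and bring the Ramanujan $J$-fraction into the form \eqref{5.10a}; reading off the coefficients should then reproduce $\sigma_{n}^{(\nu)}$ and $\tau_{n}^{(\nu)}$ of \eqref{5.5}, establishing \eqref{5.10a} for $\nu=0$ and $\nu=1$.

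The case $\nu=2$ is where I expect the real work. The difficulty is that no symmetric or antisymmetric digamma combination produces $\sum_{k}E_{2k+2}(\tfrac{1+x}{2})z^{2k}$ directly: the even-index combination always carries the spurious constant term $E_{0}=1$. Thus $\nu=2$ is intrinsically a \emph{shifted} version of $\nu=0$, governed by $F^{(0)}(z)=1+z^{2}F^{(2)}(z)$, so that $F^{(2)}$ is obtained from $F^{(0)}$ by deleting the first moment. To produce the $J$-fraction \eqref{5.10a} for $\nu=2$ I would either push this relation through a contraction argument (Lemma~\ref{lem:3.6}) or redo the Ramanujan/equivalence computation with the numerator $u+2m$ replaced by the combination corresponding to the once-shifted sequence, and then verify that the shift sends $(\sigma_{n}^{(0)},\tau_{n}^{(0)})$ precisely to $(\sigma_{n}^{(2)},\tau_{n}^{(2)})$ — which is the content of the index shift $n\mapsto n+\tfrac{\nu}{2}$ visible in \eqref{5.5}. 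Carrying out this bookkeeping, together with pinning down the exact integer constants in the first-power Ramanujan continued fraction, is the main obstacle; once those are in hand, the remainder is the same routine equivalence-transformation algebra already executed in Section~4, and Lemma~\ref{lem:3.4} then delivers Theorem~\ref{thm:5.1}.
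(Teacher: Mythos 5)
Your plan coincides with the paper's own proof: the digamma representations you describe are exactly Lemma~\ref{lem:5.4} (the paper derives them from \eqref{2.0} and the expansion \eqref{4.6} rather than from the alternating partial-fraction series, but the two routes are equivalent), the ``first-power companions'' of \eqref{4.8} that you need are precisely Lange's catalogued continued fractions (T.4)/(T.6) and (U.4)/(U.7) in \cite{La}, and the case $\nu=2$ is handled exactly as you suggest, via $z^{2}F^{(2)}(z)=F^{(0)}(z)-1$ and the odd canonical contraction \eqref{3.16}. The only ingredient you leave unspecified --- the explicit partial numerators and denominators of those two Lange entries --- is supplied by the citation, after which the equivalence-transformation bookkeeping with \eqref{3.13}, \eqref{3.14} and \eqref{3.15} proceeds as you outline.
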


The proof of this, in turn, relies on the following important connection with
the digamma function.

\begin{lemma}\label{lem:5.4}
With $F^{(\nu)}(z)$, $\nu=0, 1, 2$, as defined in \eqref{5.10}, we have
\begin{align}
F^{(0)}(z) &= \frac{\psi(\tfrac{1}{2z}+\tfrac{3+x}{4})
-\psi(\tfrac{1}{2z}+\tfrac{1+x}{4})+\psi(\tfrac{1}{2z}+\tfrac{3-x}{4})
-\psi(\tfrac{1}{2z}+\tfrac{1-x}{4})}{2z},\label{5.11} \\
F^{(1)}(z) &= \frac{-\psi(\tfrac{1}{2z}+\tfrac{3+x}{4})
+\psi(\tfrac{1}{2z}+\tfrac{1+x}{4})+\psi(\tfrac{1}{2z}+\tfrac{3-x}{4})
-\psi(\tfrac{1}{2z}+\tfrac{1-x}{4})}{2z^2},\label{5.12}\\
F^{(2)}(z) &= \frac{F^{(0)}(z)-1}{z^2}.\label{5.12a}
\end{align}
\end{lemma}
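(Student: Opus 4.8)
The plan is to derive the three identities from a single source: the asymptotic expansion of $\psi(z+x)$ that already appears as equation \eqref{4.6} in the proof of Lemma~\ref{lem:4.3}. That expansion expresses $\psi(z+x)$ as a power series in $1/z$ whose coefficients are Bernoulli polynomials $B_{n+1}(x)$. Since the target is a generating function for \emph{Euler} polynomials, the key bridge is identity \eqref{2.0}, namely $E_{n-1}(x)=\frac{2^n}{n}\bigl(B_n(\tfrac{x+1}{2})-B_n(\tfrac{x}{2})\bigr)$, which converts differences of Bernoulli polynomials at half-integer-shifted arguments into Euler polynomials.

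Concretely, I would first record, from \eqref{4.6}, the expansion
\[
\psi(w+a)=\log w+\frac{a-\tfrac12}{w}-\sum_{n=1}^{\infty}\frac{(-1)^{n+1}B_{n+1}(a)}{(n+1)w^{n+1}},
\]
and then form the particular linear combinations of $\psi$-values that appear on the right-hand sides of \eqref{5.11} and \eqref{5.12}, with $w=\tfrac{1}{2z}$ and the four shifts $a=\tfrac{3\pm x}{4},\tfrac{1\pm x}{4}$. The four arguments differ by multiples of $\tfrac12$, so the shifts correspond to evaluating Bernoulli polynomials at arguments spaced by $\tfrac12$; the alternating sign pattern in the combination is chosen precisely so that the $\log w$ terms cancel and the surviving Bernoulli-polynomial coefficients assemble into differences of the form $B_{n+1}(\cdot+\tfrac12)-B_{n+1}(\cdot)$, which \eqref{2.0} rewrites as Euler polynomials $E_n$. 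The reflection formula \eqref{4.7} (used exactly as in the proof of Lemma~\ref{lem:4.3}) collapses the $+x$ and $-x$ pairs and eliminates the even-index or odd-index contributions as appropriate, so that $F^{(0)}$ retains only even-index Euler polynomials $E_{2k}$ and $F^{(1)}$ only the odd-index ones $E_{2k+1}$, matching \eqref{5.10}. The overall powers of $2z$ in the denominators of \eqref{5.11}--\eqref{5.12} come from the factor $2^n/n$ in \eqref{2.0} combined with the $w=1/(2z)$ substitution; I would track these powers carefully. Finally, \eqref{5.12a} is the easiest: since $E_{2k+2}(\tfrac{x+1}{2})$ is just the coefficient sequence of $F^{(0)}$ shifted by one index, the relation $F^{(2)}(z)=(F^{(0)}(z)-1)/z^2$ follows at once from $E_0(\tfrac{x+1}{2})=1$ and comparing $\sum_k E_{2k+2}(\tfrac{x+1}{2})z^{2k}$ with $z^{-2}\bigl(\sum_k E_{2k}(\tfrac{x+1}{2})z^{2k}-1\bigr)$.

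The main obstacle I anticipate is purely bookkeeping rather than conceptual: keeping the parity arguments straight and verifying that the chosen signs in the four-term $\psi$-combinations really do kill the unwanted terms and produce exactly $E_{2k}$ for $\nu=0$ versus $E_{2k+1}$ for $\nu=1$, together with the correct leading factor and the correct power of $2z$. In particular, one must confirm that the constant ($n=0$, i.e. the $1/w$) terms cancel or contribute correctly to the lowest-order coefficient, since these determine $E_\nu(\tfrac{x+1}{2})$ as the numerator constant that later feeds into \eqref{5.10a} and Corollary~\ref{cor:5.2}. Once the alternating combination is set up correctly, each identity reduces to matching power-series coefficients term by term, and the verification is routine.
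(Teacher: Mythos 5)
Your proposal follows essentially the same route as the paper: the authors also start from the asymptotic expansion \eqref{4.6}, form the difference of $\psi$ at the shifted arguments $\tfrac{3+x}{4}$ and $\tfrac{1+x}{4}$, use the reflection formula \eqref{4.7} to produce the $-x$ counterpart, convert the Bernoulli differences to Euler polynomials via \eqref{2.0} (their equation \eqref{5.14a}), and select the parity $\nu$ by adding or subtracting the plain and alternating series, with the $n=0$ correction handled separately for $\nu=0$ and \eqref{5.12a} read off from the definition \eqref{5.10}. The bookkeeping issues you flag (sign pattern, cancellation of the $\log$ and $1/(2z)$ terms, powers of $2z$) are exactly the points the paper's proof works through explicitly.
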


\begin{proof}
We use the asymptotic expansion \eqref{4.6} and replace $x$ by $(3+x)/4$ and by
$(1+x)/4$. The upon subtracting, we get
\begin{equation}\label{5.13}
\sum_{n=1}^{\infty}(-1)^{n+1}
\frac{B_{n+1}(\tfrac{3+x}{4})-B_{n+1}(\tfrac{1+x}{4})}{(n+1)z^{n+1}}
=-\psi(z+\tfrac{3+x}{4})+\psi(z+\tfrac{1+x}{4})+\frac{1}{2z}.
\end{equation}
By the identity \eqref{4.7}, we have
\[
(-1)^{n+1}B_{n+1}(\tfrac{3+x}{4}) = B_{n+1}(\tfrac{1-x}{4}),\qquad
(-1)^{n+1}B_{n+1}(\tfrac{1+x}{4}) = B_{n+1}(\tfrac{3-x}{4}).
\]
Applying these identities to the left-hand side of \eqref{5.13} and then 
replacing $x$ by $-x$ and multiplying both sides by $-1$, we get
\begin{equation}\label{5.14}
\sum_{n=1}^{\infty}
\frac{B_{n+1}(\tfrac{3+x}{4})-B_{n+1}(\tfrac{1+x}{4})}{(n+1)z^{n+1}}
=\psi(z+\tfrac{3-x}{4})-\psi(z+\tfrac{1-x}{4})-\frac{1}{2z}.
\end{equation}

Now, by \eqref{2.0} we have
\begin{equation}\label{5.14a}
E_{2n+\nu}(\tfrac{x+1}{2}) = \frac{2^{2n+\nu+1}}{2n+\nu+1}\left(
B_{2n+\nu+1}(\tfrac{3+x}{4})-B_{2n+\nu+1}(\tfrac{1+x}{4})\right),
\end{equation}
and thus
\begin{align*}
2\sum_{n=0}^{\infty}&E_{2n+\nu}(\tfrac{x+1}{2})\left(\tfrac{1}{2z}\right)^{2n+\nu+1} \\
&=2\sum_{n=0}^{\infty}\frac{B_{2n+\nu+1}(\tfrac{3+x}{4})}{(2n+\nu+1)z^{2n+\nu+1}}
-2\sum_{n=0}^{\infty}\frac{B_{2n+\nu+1}(\tfrac{1+x}{4})}{(2n+\nu+1)z^{2n+\nu+1}} \\
&=\sum_{n=0}^{\infty}\frac{B_{n+\nu+1}(\tfrac{3+x}{4})}{(n+\nu+1)z^{n+\nu+1}}
+\sum_{n=0}^{\infty}\frac{(-1)^nB_{n+\nu+1}(\tfrac{3+x}{4})}{(n+\nu+1)z^{n+\nu+1}}\\
&\qquad-\sum_{n=0}^{\infty}\frac{B_{n+\nu+1}(\tfrac{1+x}{4})}{(n+\nu+1)z^{n+\nu+1}}
-\sum_{n=0}^{\infty}\frac{(-1)^nB_{n+\nu+1}(\tfrac{1+x}{4})}{(n+\nu+1)z^{n+\nu+1}}.
\end{align*}
Therefore
\begin{align}
2\sum_{n=0}^{\infty}E_{2n+\nu}(\tfrac{x+1}{2})&\left(\tfrac{1}{2z}\right)^{2n+\nu+1}
=\sum_{n=\nu}^{\infty}
\frac{B_{n+1}(\tfrac{3+x}{4})-B_{n+1}(\tfrac{1+x}{4})}{(n+1)z^{n+1}}\label{5.15}\\
&-(-1)^{\nu}\sum_{n=\nu}^{\infty}(-1)^{n+1}
\frac{B_{n+1}(\tfrac{3+x}{4})-B_{n+1}(\tfrac{1+x}{4})}{(n+1)z^{n+1}}.\nonumber
\end{align}
First we let $\nu=1$. We use \eqref{5.13} and \eqref{5.14}, replace $z$ by
$\frac{1}{2z}$, and divide both sides of \eqref{5.15} by $2z^2$, to get 
\eqref{5.12}.

When $\nu=0$, we need to subtract the terms for $n=0$ from the right-hand side
of \eqref{5.15}, namely
\[
\frac{2}{z}\left(B_{1}(\tfrac{3+x}{4})-B_{1}(\tfrac{1+x}{4})\right)
=\frac{2}{z}\left(\left(\frac{3+x}{4}-\frac{1}{2}\right)
-\left(\frac{1+x}{4}-\frac{1}{2}\right)\right)
=\frac{1}{z}.
\]
Now we use again \eqref{5.13} and \eqref{5.14} and note that this last 
expression $\frac{1}{z}$ and the terms $\pm\frac{2}{z}$ cancel each other.
Once again, by replacing $z$ by $\frac{1}{2z}$ and dividing both sides by $2z$, 
we get \eqref{5.11}. Finally, \eqref{5.12a} follows directly from \eqref{5.10}.
\end{proof}

\begin{proof}[Proof of Lemma~\ref{lem:5.3}]
We begin with the case $\nu=0$ and use \eqref{5.11}. By Equations (T.4) and
(T.6) in \cite[p.~274]{La} we have
\begin{equation}\label{5.18}
\frac{\psi\left(\tfrac{s-a+3b}{4b}\right)-\psi\left(\tfrac{s-a+b}{4b}\right)
+\psi\left(\tfrac{s+a+3b}{4b}\right)-\psi\left(\tfrac{s+a+b}{4b}\right)}{4b}
=\cfrac{1}{s+\cfrac{a_{1}}{s+\cfrac{a_{2}}{s+\ddots}}},
\end{equation}
where for $n\geq 1$, $a_{2n-1}=(2n-1)^{2}b^{2}-a^{2}$ and $a_{2n}=4n^{2}b^{2}.$ If we set
$b=1$, $a=x$, and $s=2/z$ then the left-hand side of \eqref{5.18} is
exactly $zF^{(0)}(z)/2$, so that
\begin{equation}\label{5.19}
F^{(0)}(z)
=\cfrac{\frac{2}{z}}{\frac{2}{z}+\cfrac{a_{1}}{\frac{2}{z}+\cfrac{a_{2}}
{\frac{2}{z}+\ddots}}}
=\cfrac{1}{1+\cfrac{\frac{a_{1}}{4}z^{2}}{1+\cfrac{\frac{a_{2}}{4}z^{2}}
{1+\ddots}}},
\end{equation}
where for the equation on the right we have used \eqref{3.13} with $r_{m}=z/2$.
Next we apply \eqref{3.15} with $t=z^{2}$ and $\alpha_{n}=-a_{n}/4$.
It is now easy to check that
\[
-\alpha_{1}=\sigma_{0}^{(0)},\quad 
-\alpha_{2n-1}\alpha_{2n}=\tau_{n}^{(0)},\quad\text{and}\quad 
-\alpha_{2n}-\alpha_{2n+1}=\sigma_{n}^{(0)}\quad (n\geq 1).
\]
The identity \eqref{5.19} therefore gives \eqref{5.10a} for $\nu=0$.

Next, we consider the case $\nu=1$ and use \eqref{5.12}. By Equations (U.4) and
(U.7) in \cite[p.~275]{La} we have
\begin{equation}\label{5.20}
\frac{\psi\left(\tfrac{s-a+3}{4}\right)-\psi\left(\tfrac{s+a+3}{4}\right)
+\psi\left(\tfrac{s+a+1}{4}\right)-\psi\left(\tfrac{s-a+1}{4}\right)}{4}
=\cfrac{\frac{a}{s^{2}-1}}{1+\cfrac{\frac{2^{2}-a^{2}}{s^{2}-1}}
{1+\cfrac{\frac{2^{2}}{s^{2}-1}}{1+\ddots}}}.
\end{equation}
If we set $a=x$ and $s=2/z$, then the left-hand side of \eqref{5.20} becomes 
$z^{2}F^{(1)}(z)/2$, so that
\[
F^{(1)}(z)
=\cfrac{\frac{\frac{2x}{z}}{\frac{4}{z^{2}}-1}}{1+\cfrac{\frac{2^{2}-a^{2}}
{\frac{4}{z^{2}}-1}}{1+\cfrac{\frac{2^{2}}{\frac{4}{z^{2}}-1}}{1+\ddots}}}.
\]
Now we use \eqref{3.14} with $b_{n}=1$ for all $n\geq 0$, and with
\[
a_{1}=\frac{2xt^{2}}{\frac{4}{z^{2}}-1}=\frac{2xt^{2}z^{2}}{4-z^{2}}
\]
and for $n\geq 1$
\[
a_{2n}=\frac{(2n)^{2}-x^{2}}{\frac{4}{z^{2}}-1}
=\frac{(2n)^{2}-x^{2}}{4-z^{2}}z^{2},\qquad
a_{2n+1}=\frac{4n^{2}}{\frac{4}{z^{2}}-1}=\frac{4n^{2}z^{2}}{4-z^{2}}.
\]
Then 
\[
-a_{2n}a_{2n+1}\frac{b_{2n+2}}{b_{2n}}
=\frac{4n^{2}(x^{2}-(2n)^{2})}{\left(\frac{4}{z^{2}}-1\right)^{2}}
=\frac{4n^{2}(x^{2}-(2n)^{2})z^{4}}{(4-z^{2})^{2}},
\]
and 
\begin{align*}
a_{2n+2}+b_{2n+1}b_{2n+2}+a_{2n+1}\frac{b_{2n+2}}{b_{2n}}
&=1+\frac{4(n+1)^{2}+4n^{2}-x^{2}}{\frac{4}{z^{2}}-1} \\
&=1+\frac{4(n+1)^{2}+4n^{2}-x^{2}}{4-z^{2}}z^{2}.
\end{align*}
Now let $\alpha_{n}=4n^{2}(x^{2}-(2n)^{2})$ and 
$\beta_{n+1}=4(n+1)^{2}+4n^{2}-x^{2}=8n^{2}+8n+4-x^{2}$; then 
\[
F^{(1)}(z) 
=\cfrac{\frac{2xt^{2}}{4-z^{2}}z^{2}}{1+\frac{2^{2}-x^{2}}{4-z^{2}}z^{2}
+\cfrac{\frac{\alpha_{1}z^4}{(4-z^2)^{2}}}{1+\frac{\beta_{2}z^2}{4-z^2}
+\cfrac{\frac{\alpha_{2}z^4}{(4-z^2)^{2}}}{1+\frac{\beta_{3}z^2}{4-z^2}+\ddots}}}.
\]
Using \eqref{3.13} with $r_{m}=(4-z^{2})/4$, we finally get
\begin{equation}\label{5.21}
F^{(1)}(z)
=\cfrac{\frac{x}{2}}{1+\frac{2^{2}-1-x^{2}}{4}z^{2}
+\cfrac{\frac{\alpha_{1}}{16}z^{4}}{1+\frac{\beta_{2}-1}{4}z^{2}
+\cfrac{\frac{\alpha_{2}}{16}z^{4}}{1+\frac{\beta_{3}-1}{4}z^{2}+\ddots}}}.
\end{equation}
If we note that 
\[
\frac{x}{2}=E_{1}(\tfrac{1+x}{2}),\quad
\frac{3-x^{2}}{4}=\sigma_{0}^{(1)},\quad
\frac{\beta_{n+1}-1}{4}=\sigma_{n}^{(1)},
\quad\text{and}\quad
\frac{\alpha_{n}}{16}=\tau_{n}^{(1)}
\quad (n\geq 1),
\]
then we see that \eqref{5.21} gives \eqref{5.10a} for $\nu=1$.

To deal with the final case, $\nu=2$, we use \eqref{5.13} in the form
$z^2F^{(2)}(z)=F^{(0)}(z)-1$, and combine it with \eqref{5.19}, namely
\[
F^{(0)}(z) = \cfrac{1}{1+\cfrac{\frac{a_{1}}{4}z^{2}}
{1+\cfrac{\frac{a_{2}}{4}z^{2}}{1+\ddots}}},
\]
where $a_{2n+1}=(2n+1)^{2}-x^{2}$ and $a_{2n}=4n^{2}.$ Now we apply the odd 
canonical contraction \eqref{3.16} with $\alpha_{n}=-a_{n}/4$ and $t=z^{2}$,
obtaining
\begin{equation}\label{5.22}
z^{2}F^{(2)}(z) =\cfrac{\alpha_{1}z^{2}}{1-(\alpha_{1}+\alpha_{2})z^{2}
-\cfrac{\alpha_{2}\alpha_{3}z^{4}}{1-(\alpha_{3}+\alpha_{4})z^{2}
-\cfrac{\alpha_{4}\alpha_{5}z^{4}}{1-(\alpha_{5}+\alpha_{6})z^{2}
-\cfrac{\alpha_{6}\alpha_{7}z^{4}}{\ddots}}}}.
\end{equation}
Finally, it is easy to check that
\[
-\alpha_{1}=E_{2}(\tfrac{1+x}{2}),\quad 
-\alpha_{2n+2}\alpha_{2n+3}=\tau_{n+1}^{(2)},\quad\text{and}\quad 
-\alpha_{2n+1}-\alpha_{2n+2}=\sigma_{n}^{(2)}\quad (n\geq 0).
\]
This, with \eqref{5.22}, gives \eqref{5.10a} for $\nu=2$, and the proof is
complete.
\end{proof}

\section{Shifted sequences}\label{sec:ShiftedSequences}

We saw in the proof of Lemma~\ref{lem:5.3} that the Hankel determinant 
\eqref{5.9} for the case $\nu=2$ was obtained via \eqref{5.12a}, and this also
required some manipulation of the appropriate continued fractions. An 
alternative approach is provided through some results in the literature that 
make it possible to obtain \eqref{5.9} as a consequence of \eqref{5.7}
without using the generating functions \eqref{5.10}. We will
describe this now, along with some further remarks. Since Theorem~\ref{thm:5.1}
and Corollary~\ref{cor:5.2} are proven for the case $\nu=2$, and for the sake
of brevity, we will give only sketches of proofs in what follows.

We begin with the observation that if we set
\begin{equation}\label{6.1}
c_k^{(\nu)}:=E_{2k+\nu}(\tfrac{x+1}{2}),\qquad \nu=0, 1, 2,\ldots,
\end{equation}
as in Theorem~\ref{thm:5.1}, then
\begin{equation}\label{6.2}
H_n(c_k^{(\nu+2)})=H_n(c_{k+1}^{(\nu)}).
\end{equation}
Although the results in Section~5 are valid only for $\nu=0$, 1, and 2, we can
consider \eqref{6.1} and \eqref{6.2} for any integer parameter $\nu\geq 0$. 

We now take a more general approach, based on results in \cite{Ha16} and
\cite{MWY}.
Given a sequence ${\bf a}=(a_0,a_1,\ldots)$, let $P_n(y)$, $n=0,1,\ldots$, 
be the polynomials
orthogonal with respect to ${\bf a}$, and suppose that it satisfies the
recurrence relation $P_0(y)=1$, $P_1(y)=y+s_0$, and
\[
P_{n+1}(y) = (y+s_n)P_n(y) - t_nP_{n-1}(y)\qquad (n\geq 1),
\]
as in \eqref{3.6}. Following \cite{MWY}, we consider the infinite band matrix
\begin{equation}\label{6.4}
J:=\begin{pmatrix}
-s_{0} & 1 & 0 & 0 &\cdots \\
t_{1} & -s_{1} & 1 & 0 & \cdots \\
0 & t_{2} & -s_{2} & 1 & \cdots \\
\vdots & \vdots & \ddots & \ddots & \ddots 
\end{pmatrix}.
\end{equation}
Furthermore, for each $n\geq 0$ let $J_{n}$ be the $(n+1)$th leading principal 
submatrix of $J$ and let $d_{n}:=\det{J_{n}}$, so that $d_0=-s_0$. We also set
$d_{-1}=1$ by convention.

We can now quote the following results.

\begin{proposition}[{\cite[Prop.~1.2]{MWY}}]\label{prop:6.2}
With notation as above, for a given sequence ${\bf a}$ we have
\begin{equation}\label{6.5}
H_{n}(a_{k+1})=H_{n}(a_{k})\cdot d_{n},
\end{equation}
and 
\begin{equation}\label{6.6}
H_{n}(a_{k+2})=H_{n}(a_{k})\cdot\left(\prod_{\ell=1}^{n+1}t_{\ell}\right)
\cdot\sum_{\ell=-1}^{n}\frac{d_{\ell}^{2}}{\prod_{j=1}^{\ell+1}t_{j}}.
\end{equation}
\end{proposition}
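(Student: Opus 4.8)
The plan is to diagonalize the Hankel matrix by the (unit lower-triangular) change of basis furnished by the orthogonal polynomials, and then read off how the single and double shifts act on that diagonal form. Introduce the linear functional $\mathcal L$ with $\mathcal L[y^k]=a_k$, so that $H_n(a_k)=\det\mathcal H_n$, where $\mathcal H=(a_{i+j})_{i,j\ge 0}$ and $\mathcal H_n$ is its leading $(n+1)$-block, and let $L=(p_{i,j})$ be the infinite unit lower-triangular matrix whose $i$th row lists the coefficients of $P_i(y)=\sum_j p_{i,j}y^j$. The orthogonality relation $\mathcal L[P_mP_n]=\zeta_n\delta_{mn}$ from Lemma~\ref{lem:3.1} is exactly the matrix identity $L\mathcal H L^{\mathsf T}=Z$, with $Z=\operatorname{diag}(\zeta_0,\zeta_1,\dots)$ and $\zeta_k=H_k(a_k)/H_{k-1}(a_k)$ (convention $H_{-1}=1$); since $L$ is lower triangular the same identity survives truncation to any leading block, recovering $\det\mathcal H_n=\prod_{k=0}^n\zeta_k=H_n(a_k)$. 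The recurrence \eqref{3.6}, written as $yP_m=P_{m+1}-s_mP_m+t_mP_{m-1}$, says precisely that multiplication by $y$ acts on the basis $(P_k)$ through the matrix $M$ with $M_{m+1,m}=1$, $M_{m,m}=-s_m$, $M_{m-1,m}=t_m$; comparison with \eqref{6.4} gives $M=J^{\mathsf T}$, so $\det J_n=\det M_n=d_n$, where $M_n$ denotes the $(n+1)$-st leading principal submatrix of $M$.

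For \eqref{6.5} I would shift the functional to $\mathcal L'[p]:=\mathcal L[yp]$, so that $\mathcal H'=(a_{i+j+1})$ and $H_n(a_{k+1})=\det(L\mathcal H'L^{\mathsf T})_n$. Expanding $(L\mathcal H'L^{\mathsf T})_{mn}=\mathcal L[yP_mP_n]=\sum_k M_{km}\mathcal L[P_kP_n]=M_{nm}\zeta_n=(JZ)_{mn}$ shows $L\mathcal H'L^{\mathsf T}=JZ$; because $Z$ is diagonal there is no leakage from higher indices, so the leading $(n+1)$-block is simply $J_nZ_n$ with $Z_n=\operatorname{diag}(\zeta_0,\dots,\zeta_n)$. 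Taking determinants gives $H_n(a_{k+1})=d_n\prod_{k=0}^n\zeta_k=d_n\,H_n(a_k)$, which is \eqref{6.5}.

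For \eqref{6.6} I would apply the same idea to $\mathcal L''[p]:=\mathcal L[y^2p]$, where $\mathcal H''=(a_{i+j+2})$. Writing $\mathcal L[y^2P_mP_n]=\mathcal L[(yP_m)(yP_n)]=\sum_k M_{km}M_{kn}\zeta_k$ identifies $L\mathcal H''L^{\mathsf T}$ with $M^{\mathsf T}ZM=JZJ^{\mathsf T}$. Now $M_{km}\neq0$ forces $k\in\{m-1,m,m+1\}$, so the leading $(n+1)$-block equals $\widehat M^{\mathsf T}\widehat Z\,\widehat M$, where $\widehat M$ is the non-square $(n+2)\times(n+1)$ truncation of $M$ (rows $0,\dots,n+1$, columns $0,\dots,n$) and $\widehat Z=\operatorname{diag}(\zeta_0,\dots,\zeta_{n+1})$. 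Since $\widehat M$ is tall, the Cauchy--Binet formula gives
\[
H_n(a_{k+2})=\sum_{\ell=0}^{n+1}\Big(\prod_{\substack{0\le k\le n+1\\ k\ne \ell}}\zeta_k\Big)\big(\det \widehat M^{(\ell)}\big)^2,
\]
where $\widehat M^{(\ell)}$ is $\widehat M$ with row $\ell$ deleted. The key computation is the evaluation of these minors: after deleting row $\ell$ and partitioning rows into $\{0,\dots,\ell-1\}\cup\{\ell+1,\dots,n+1\}$ and columns into $\{0,\dots,\ell-1\}\cup\{\ell,\dots,n\}$, the lower-left block vanishes (its entries have row$-$column$\ge2$) and the lower-right block is upper triangular with unit diagonal (its diagonal is the subdiagonal of $M$); hence $\widehat M^{(\ell)}$ is block upper triangular with top-left block $M_{\ell-1}$ and unit-determinant bottom-right block, giving $\det\widehat M^{(\ell)}=d_{\ell-1}$. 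Substituting this, using $\prod_{k=0}^{n+1}\zeta_k=H_{n+1}(a_k)$ together with $\zeta_k=\zeta_0\prod_{j=1}^k t_j$ and $H_{n+1}(a_k)=H_n(a_k)\,\zeta_{n+1}$, and finally reindexing $\ell\mapsto\ell-1$, collapses the sum exactly into \eqref{6.6} (the factor $\zeta_0=a_0$ cancelling).

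The routine parts are the two functional-shift expansions and the final bookkeeping; the one genuinely delicate step is the minor evaluation $\det\widehat M^{(\ell)}=d_{\ell-1}$. The cleanest way to secure it is the block-triangular splitting above, which handles uniformly the boundary cases $\ell=0$ (empty top block, $d_{-1}=1$) and $\ell=n+1$ (empty bottom block, $d_n$); alternatively one may check that these minors obey the recurrence $d_\ell=-s_\ell d_{\ell-1}-t_\ell d_{\ell-2}$ by cofactor expansion. A secondary point requiring a little care is that truncating the infinite products $JZ$ and $JZJ^{\mathsf T}$ to their leading blocks introduces no contribution from indices beyond $n$ (none for the diagonal $Z$, and only from index $n+1$ for the double shift), which is precisely what keeps the Cauchy--Binet sum finite.
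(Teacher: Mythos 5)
Your argument is correct, and it is worth noting that the paper itself offers no proof of this proposition: it is imported verbatim as \cite[Prop.~1.2]{MWY}, so your write-up supplies a self-contained derivation of a quoted result rather than paralleling anything in the text. The core identities all check out: $L\mathcal H L^{\mathsf T}=Z$ is the matrix form of \eqref{3.4}; the identification $M=J^{\mathsf T}$ of the multiplication-by-$y$ operator with the transpose of the band matrix \eqref{6.4} is right; the truncation claims are justified because $L$ is unit lower triangular and $M$ is banded with bandwidth one; the block-triangular evaluation $\det\widehat M^{(\ell)}=d_{\ell-1}$ (lower-left block has row$-$column $\ge 2$, hence vanishes; lower-right block is unit upper triangular via the subdiagonal of ones) is the one genuinely delicate point and you handle it correctly, including the boundary cases $\ell=0$ and $\ell=n+1$; and the final bookkeeping with $\zeta_k=a_0\prod_{j=1}^k t_j$, $\prod_{k=0}^{n+1}\zeta_k=H_{n+1}(a_k)$ and the reindexing $\ell\mapsto\ell-1$ reproduces \eqref{6.5} and \eqref{6.6} exactly (I verified both on $a_k=k!$ for small $n$). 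Your approach --- congruence to diagonal form followed by Cauchy--Binet for the double shift --- is essentially the standard $LDL^{\mathsf T}$/Jacobi-matrix mechanism that underlies the cited source, so conceptually it buys the same thing the citation does, but it has the merit of making transparent why the single shift contributes the single determinant $d_n$ while the double shift produces a sum of squares $d_\ell^2$ weighted by reciprocals of the $\zeta$'s: these are precisely the squared maximal minors of the tall truncation $\widehat M$. The only cosmetic caveat is that the construction presupposes $H_n(a_k)\neq 0$ for all $n$ (so that the $P_n$, $s_n$, $t_n$ exist), which is the same nondegeneracy hypothesis implicit in the paper's Section~6 setup.
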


\begin{proposition}[{\cite[Eq.~(2.4)]{Ha16}}]\label{prop:6.3}
For a given sequence ${\bf a}$ and $(s_n)$ as defined above, we have
\begin{equation}\label{6.7}
s_{n}=-\frac{1}{H_{n-1}(a_{k+1})}
\left(\frac{H_{n-1}(a_{k})H_{n}(a_{k+1})}{H_{n}(a_{k})}
+\frac{H_{n}(a_{k})H_{n-2}(a_{k+1})}{H_{n-1}(a_{k})}\right).
\end{equation}
\end{proposition}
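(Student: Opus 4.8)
The plan is to derive \eqref{6.7} purely algebraically from the three-term recurrence satisfied by the determinants $d_n=\det J_n$, combined with the two facts already available to us: the identity \eqref{6.5} and Corollary~\ref{cor:3.3}. First I would exploit the fact that $J_n$ is tridiagonal, with diagonal entries $-s_0,\dots,-s_n$, all superdiagonal entries equal to $1$, and subdiagonal entries $t_1,\dots,t_n$. Expanding $\det J_n$ along its last row (the single superdiagonal entry above the deleted column collapses the relevant minor) gives the standard tridiagonal recurrence
\[
d_n=-s_n\,d_{n-1}-t_n\,d_{n-2}\qquad(n\ge 1),
\]
with boundary values $d_{-1}=1$ and $d_0=-s_0$. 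Solving this linear relation for $s_n$ yields
\[
s_n=-\frac{d_n+t_n\,d_{n-2}}{d_{n-1}},
\]
which is the backbone of the argument.

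The remaining task is to translate $d_m$ and $t_n$ into Hankel determinants. By \eqref{6.5} we have $d_m=H_m(a_{k+1})/H_m(a_k)$ for each index in sight, so in particular $1/d_{n-1}=H_{n-1}(a_k)/H_{n-1}(a_{k+1})$. For the weight $t_n$ I would invoke Corollary~\ref{cor:3.3}: since $H_m(a_k)=t_1^{m}t_2^{m-1}\cdots t_m$, the ratio $H_n(a_k)H_{n-2}(a_k)/H_{n-1}(a_k)^2$ telescopes to exactly $t_n$. Substituting both expressions and simplifying the two resulting summands — where the factor $H_{n-2}(a_k)$ coming from $d_{n-2}$ cancels the copy hidden inside $t_n$ — reproduces precisely the two terms on the right-hand side of \eqref{6.7}.

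Since every step is elementary, the chief difficulty is bookkeeping rather than insight: one must verify that the tridiagonal cofactor expansion is valid down to the stated initial data, adopt the conventions $d_{-1}=1$ and $H_{-1}(a_{k+1})=1$ so that the identity also reads correctly at $n=1$, and record that all Hankel determinants occurring in denominators are nonzero (guaranteed in our setting by the positivity normalization discussed in Section~3) so that the divisions are legitimate. As a sanity check on the key recurrence, I would note that $d_n=(-1)^{n+1}P_{n+1}(0)$, which follows by evaluating the recurrence \eqref{3.6} at $y=0$ and comparing signs; this identifies the $d_n$ with the orthogonal polynomials themselves, although it is not needed for the computation above.
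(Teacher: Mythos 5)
The paper does not prove Proposition~\ref{prop:6.3} at all: it is quoted verbatim from \cite[Eq.~(2.4)]{Ha16}, so there is no internal argument to compare against. Your derivation is correct and fills that gap. The tridiagonal cofactor expansion does give $d_n=-s_nd_{n-1}-t_nd_{n-2}$ with $d_{-1}=1$, $d_0=-s_0$ (the sign on the $t_n$ term works out because the two off-diagonal cofactor signs multiply to $+1$ against the overall $(-1)^{2n-1}$); solving for $s_n$ and substituting $d_m=H_m(a_{k+1})/H_m(a_k)$ from \eqref{6.5} together with the telescoped ratio $t_n=H_n(a_k)H_{n-2}(a_k)/H_{n-1}(a_k)^2$ (which holds with or without the normalization $a_0=1$, since the powers of $a_0$ cancel) reproduces both summands of \eqref{6.7} exactly; I checked the cancellation of $H_{n-2}(a_k)$ in the second term. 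Your identification $d_n=(-1)^{n+1}P_{n+1}(0)$ is also correct and is in fact the standard route by which \eqref{6.5} itself is usually proved. The one caveat worth recording is that your argument is not fully self-contained relative to the paper: it takes \eqref{6.5} as an input, and that identity is itself only quoted (from \cite{MWY}) rather than proved here, so what you have really shown is that \eqref{6.7} is an algebraic consequence of \eqref{6.5}, Corollary~\ref{cor:3.3}, and the three-term recurrence --- which is a perfectly reasonable and correct way to establish the proposition in context.
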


Set ${\bf c}^{(\nu)}=(c_{0}^{(\nu)},c_{1}^{(\nu)},\ldots)$ and also recall \eqref{6.1} and \eqref{6.2}. 
We now use Propositions~\ref{prop:6.2} and~\ref{prop:6.3} with 
${\bf a}={\bf c}^{(0)}$. With \eqref{6.2} and \eqref{6.5} we get
$H_{n}(c_k^{(2)})$, provided we know $d_n$ (see below). With \eqref{3.10}
we then get $\tau_n^{(2)}$, and \eqref{6.6} and \eqref{6.7} together give 
$\sigma_n^{(2)}$. So 
altogether we would have everything we need to know for the case $\nu=2$ in
Section~5, confirming this part of Theorem~\ref{thm:5.1} and 
Corollary~\ref{cor:5.2}. The identity \eqref{6.6} would also give us
$H_{n}(c_k^{(4)})$.

It remains to determine the factor $d_n$ in \eqref{6.5}. For this 
purpose we use the following lemma.

\begin{lemma}\label{lem:6.4}
For $\nu=0,1,2$, let the sequences $\big(\sigma_n^{(\nu)}\big)_{n\geq 0}$ and
$\big(\tau_n^{(\nu)}\big)_{n\geq 1}$ be defined as in \eqref{5.5}. Furthermore,
let
\[
J^{(\nu)}:=\begin{pmatrix}
-\sigma_{0}^{(\nu)} & 1 & 0 & 0 & \cdots \\
-\tau_{1}^{(\nu)} & -\sigma_{1}^{(\nu)} & 1 & 0 & \cdots \\
0 & -\tau_{2}^{(\nu)} & -\sigma_{2}^{(\nu)} & 1 & \cdots \\
\vdots & \vdots & \ddots & \ddots & \ddots 
\end{pmatrix}
\]
and let $d_{n}^{(\nu)}:=\det{J_{n}^{(\nu)}}$ be the determinant of the
$(n+1)$th leading principal submatrix of $J^{(\nu)}$. Then
\begin{equation}\label{6.9}
d_{n}^{(0)}=\prod_{\ell=0}^{n}\frac{x^{2}-(2\ell+1)^{2}}{4}.
\end{equation}
\end{lemma}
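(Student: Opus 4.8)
The plan is to prove \eqref{6.9} by induction on $n$, exploiting the fact that $J^{(0)}$ is tridiagonal, so that its leading principal minors obey a three-term recurrence. First I would expand $\det J_n^{(0)}$ along its last row: the only nonzero entries in that row are the diagonal entry $-\sigma_n^{(0)}$ and the subdiagonal entry $-\tau_n^{(0)}$, while the unique nonzero entry of the last column lying above the diagonal is the superdiagonal $1$. The standard cofactor expansion then gives
\[
d_n^{(0)} = -\sigma_n^{(0)}\,d_{n-1}^{(0)} + \tau_n^{(0)}\,d_{n-2}^{(0)}\qquad (n\geq 1),
\]
where the sign in the second term comes from $-(1)\cdot(-\tau_n^{(0)})=\tau_n^{(0)}$. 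The initial data are $d_{-1}^{(0)}=1$ (by the stated convention) and $d_0^{(0)}=-\sigma_0^{(0)}=\tfrac{x^2-1}{4}$, which already matches the claimed product for $n=0$.

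Writing $p_\ell:=\tfrac14\bigl(x^2-(2\ell+1)^2\bigr)$ for the factors in \eqref{6.9}, the assertion $d_n^{(0)}=\prod_{\ell=0}^n p_\ell$ is equivalent to the ratio identity $d_n^{(0)}/d_{n-1}^{(0)}=p_n$. My inductive hypothesis would be that this ratio identity holds through index $n-1$; in particular $d_{n-2}^{(0)}=d_{n-1}^{(0)}/p_{n-1}$. Dividing the recurrence above by $d_{n-1}^{(0)}$ then yields
\[
\frac{d_n^{(0)}}{d_{n-1}^{(0)}} = -\sigma_n^{(0)} + \frac{\tau_n^{(0)}}{p_{n-1}},
\]
so that the entire burden reduces to checking that the right-hand side equals $p_n$. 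The base case is immediate, since $d_0^{(0)}/d_{-1}^{(0)}=d_0^{(0)}=\tfrac{x^2-1}{4}=p_0$.

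The key observation---and essentially the only substantive point---is the algebraic identity $\tau_n^{(0)}=n^2\,p_{n-1}$, which is immediate from \eqref{5.5} because $\tau_n^{(0)}=\tfrac{n^2}{4}\bigl(x^2-(2n-1)^2\bigr)$ while $p_{n-1}=\tfrac14\bigl(x^2-(2n-1)^2\bigr)$. This collapses the quotient $\tau_n^{(0)}/p_{n-1}$ to the clean value $n^2$, after which the verification is a one-line polynomial computation: $-\sigma_n^{(0)}+n^2=-n(2n+1)+\tfrac{x^2-1}{4}+n^2=\tfrac{x^2-1}{4}-n^2-n=\tfrac14\bigl(x^2-(2n+1)^2\bigr)=p_n$. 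This closes the induction and establishes \eqref{6.9}.

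I do not anticipate a genuine obstacle: the content lies entirely in noticing that $\tau_n^{(0)}$ factors as $n^2$ times precisely the ``previous'' linear factor $p_{n-1}$ occurring in the product, which is exactly what makes the ratio telescope. As an alternative route, one may observe by comparing the recurrence above with \eqref{5.4} evaluated at $y=0$ that $d_n^{(0)}=(-1)^{n+1}q_{n+1}^{(0)}(0;x)$, and then read \eqref{6.9} off from the structure of $q_{n+1}^{(0)}$; but the direct induction sketched here is shorter and self-contained.
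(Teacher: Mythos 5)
Your proposal is correct and follows essentially the same route as the paper: the paper likewise derives the three-term recurrence $d_{n+1}^{(\nu)}=-\sigma_{n+1}^{(\nu)}d_{n}^{(\nu)}+\tau_{n+1}^{(\nu)}d_{n-1}^{(\nu)}$ from the tridiagonal structure and then obtains \eqref{6.9} by induction using \eqref{5.5}. Your write-up merely makes explicit the telescoping step the paper leaves to the reader, namely the factorization $\tau_n^{(0)}=n^2\cdot\tfrac14\bigl(x^2-(2n-1)^2\bigr)$ and the verification $-\sigma_n^{(0)}+n^2=\tfrac14\bigl(x^2-(2n+1)^2\bigr)$, both of which check out.
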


\begin{proof}
Using basic operations with determinants, we obtain the recurrence relation
\begin{equation}\label{6.10}
d_{n+1}^{(\nu)}=-\sigma_{n+1}^{(\nu)}d_{n}^{(\nu)}
+\tau_{n+1}^{(\nu)}d_{n-1}^{(\nu)},
\end{equation}
which is actually independent of the particular values of $\sigma_{n+1}^{(\nu)}$
and $\tau_{n+1}^{(\nu)}$. Using this relation and \eqref{5.5}, the desired 
identity \eqref{6.9} is obtained by induction.
\end{proof}

Combining \eqref{6.9} with \eqref{6.5} and \eqref{6.2}, we have therefore 
shown
\begin{equation}\label{6.11}
H_{n}(c_k^{(2)})=H_{n}(c_k^{(0)})
\prod_{\ell=0}^{n}\frac{x^{2}-(2\ell+1)^{2}}{4};
\end{equation}
it is now easy to see that this is consistent with \eqref{5.7} and \eqref{5.9}.

The results in this section also provide an answer as to why the results in
Section~5 do not extend to $\nu=3$ and beyond. Indeed, in analogy to 
\eqref{6.11} we get from \eqref{6.2} and \eqref{6.5},
\begin{equation}\label{6.12}
H_{n}(c_k^{(3)})=H_{n}(c_k^{(1)})\cdot d_n^{(1)}.
\end{equation}
Using the recurrence \eqref{6.10}, along with the values in \eqref{5.5}, we can
easily compute $d_n^{(1)}$ for $n=0,1,\ldots$; the first few are shown in
Table~3.

\bigskip
\begin{center}
{\renewcommand{\arraystretch}{1.2}
\begin{tabular}{|c||l|}
\hline
$n$ & $d_n^{(1)}$ \\
\hline
0 & $\frac{1}{4}(x^{2}-3)$ \\
1 & $\frac{1}{16}(x^{4}-18x^{2}+41)$ \\
2 & $\frac{1}{64}(x^{6}-53x^{4}+655x^{2}-1323)$ \\
3 & $\frac{1}{256}(x^{8}-116x^{6}+3958x^{4}-41364x^{2}+77841)$ \\
\hline
\end{tabular}}

\medskip
{\bf Table~3}: $d_n^{(1)}$ for $0\leq n\leq 3$.
\end{center}
\bigskip

The identity \eqref{6.12}, combined with \eqref{5.8}, shows that 
$H_{n}(c_k^{(3)})$ still has many linear factors, while also having 
apparently irreducible factors of increasing degrees. The same will hold for
$H_{n}(c_k^{(\nu)})$ for any integer shift $\nu$.

Finally, analogous results are true for the Bernoulli polynomial case of
Section~4. Indeed, by iterating the identities in Propositions~\ref{prop:6.2}
and~\ref{prop:6.3}, especially \eqref{6.5}, one could show that for any
integers $n\geq 1$ and $\nu\geq 1$, the Hankel determinant
$H_n\big(B_{2k+2\nu+1}(\frac{x+1}{2})\big)$ retains the same linear factors
present in $H_n\big(B_{2k+1}(\frac{x+1}{2})\big)$, as given in 
Theorem~\ref{thm:1.1}.

We close by mentioning that, for some special values of shifted Bernoulli 
polynomial sequences, their Hankel determinants have been considered in 
Sections~7 and~8 of \cite{FK}.

\end{document}